\newtheorem{theorem}{Theorem}[section]
\newtheorem{corollary}[theorem]{Corollary}
\newtheorem{proposition}[theorem]{Proposition}
 \theoremstyle{definition}
 \newtheorem{remark}[theorem]{Remark}
 \newtheorem{example}[theorem]{Example}
\numberwithin{equation}{section}
\newcommand {\N}{\mathbb{N}} 
\newcommand {\Z}{\mathbb{Z}} 
\newcommand {\R}{\mathbb{R}} 
\newcommand{\T}{\mathbb{T}}
\newcommand{\FF}{\mathcal{F}}
\newcommand{\PP}{\mathcal{P}}
\newcommand{\UU}{\mathcal{U}}
\newcommand{\VV}{\mathcal{V}}
\DeclareMathOperator{\card}{card}
\DeclareMathOperator{\Per}{Per}
\DeclareMathOperator{\Fix}{Fix}
\DeclareMathOperator{\Id}{Id}
\DeclareMathOperator{\ent}{ent}
\begin{document}
\title[Expansive actions of countable amenable groups]{Expansive actions of countable amenable groups, homoclinic pairs, and the Myhill property}
\author{Tullio Ceccherini-Silberstein}
\address{Dipartimento di Ingegneria, Universit\`a del Sannio, C.so
Garibaldi 107, 82100 Benevento, Italy}
\email{tceccher@mat.uniroma3.it}
\author{Michel Coornaert}
\address{Institut de Recherche Math\'ematique Avanc\'ee,
UMR 7501,                                             Universit\'e  de Strasbourg et CNRS,
                                                 7 rue Ren\'e-Descartes,
                                               67000 Strasbourg, France}
\email{coornaert@math.unistra.fr}
\subjclass[2010]{37D20, 37B40, 37B10, 43A07}
\keywords{Garden of Eden, Myhill property, amenable group, hyperbolic dynamical system,  expansiveness, symbolic dynamics, topological entropy, strongly irreducible subshift}
\begin{abstract}
Let $X$ be a compact metrizable space equipped with a continuous action of a countable amenable group $G$.
Suppose  that the dynamical system $(X,G)$  is expansive and is the quotient 
by a uniformly bounded-to-one factor map of  a strongly irreducible subshift.
Let $\tau \colon X \to X$ be  a continuous map commuting with the action of $G$.
We prove that if there is no pair of distinct $G$-homoclinic points in $X$ having the same image under $\tau$ then $\tau$ is surjective.
\end{abstract}
\date{\today}
\maketitle

\section{Introduction}

Let $G$ be a countable group and  $A$ a finite set.
Consider the set $A^G$ consisting of all maps $u \colon G \to A$.
This set is called the set of \emph{configurations} over the group $G$ and the \emph{alphabet} 
$A$.
Equip $A^G$ with its \emph{prodiscrete topology}, that is,  the 
topology of pointwise convergence. 
The \emph{shift action} of $G$  on $A^G$ 
is the continuous action   
defined by $gu(h) := u(g^{-1}h)$ for all $g,h \in G$ and $u \in A^G$.
A \emph{cellular automaton} over $A^G$ is a continuous map $\tau \colon A^G \to A^G$ that is $G$-equivariant, i.e., satisfies $\tau(g u) = g \tau(u)$ for all $g \in G$ 
and $u \in A^G$.   
Two configurations  $u, v  \in A^G$ are said to be \emph{almost equal} if 
they coincide outside of a finite subset of $G$.
A cellular automaton $\tau \colon A^G \to A^G$ is said to be  \emph{pre-injective}
if there exist no distinct configurations $u, v \in A^G$ that are almost equal and satisfy 
$\tau(u) = \tau(v)$.
The celebrated  \emph{Garden of Eden theorem}, originally established by Moore and Myhill in the early 1960s,  states that a cellular automaton 
over the group $\Z$ of integers  is surjective if and only if it is pre-injective.
Actually, the implication surjective $\Rightarrow$ pre-injective was first established by Moore~\cite{moore} and,
shortly after,
 Myhill~\cite{myhill} proved the converse implication.
The Moore-Myhill Garden of Eden theorem was extended to all amenable groups
in~\cite{ceccherini}.
It follows from a result of Bartholdi~\cite{bartholdi}
that if a group $G$ is non-amenable then there exist cellular automata over $G$ that are surjective but not pre-injective.
Thus, the Garden of Eden theorem yields a characterization of amenability for groups. 
 \par
The goal of the present paper is to extend the Myhill implication in the Garden of Eden theorem to  certain   dynamical systems
$(X,G)$, consisting of a compact metrizable space $X$ equipped with a continuous action of a countable  amenable group $G$.
Our motivations come from Gromov (cf. \cite[Section 8.H]{gromov-esav}) 
who mentions the possibility of extending the Garden of Eden theorem 
to a ``suitable class of hyperbolic dynamical systems".
\par
Before stating our main result, let us briefly recall some additional definitions
(see Section~\ref{sec:background} for more details).
A closed $G$-invariant subset of $A^G$ is called a \emph{subshift}.
  A subshift $\Sigma \subset A^G$ is said to be \emph{strongly irreducible} if there is a finite subset 
 $\Delta \subset G$ satisfying the following property:
 if $\Omega_1$ and $\Omega_2$   are finite subsets of $G$
such that   there exists no element $g \in \Delta$ such that 
the right-translate of $\Omega_1$ by $g$ meets  
$\Omega_2$,  then, given any two configurations $u_1,u_2 \in \Sigma$, there exists  a configuration $u \in \Sigma$ which coincides with $u_1$ on $\Omega_1$ and with $u_2$ on $\Omega_2$.
\par
Let $(X,G)$ be a dynamical system consisting of a compact metrizable space $X$ equipped with  a continuous action of the  group $G$.
Two points $x,y \in X$ are said to be \emph{homoclinic} with respect to the action of $G$ on $X$, or more briefly $G$-\emph{homoclinic}, if for every $\varepsilon > 0$ there exists a finite subset $F \subset G$ such that 
$d(g x, g y) < \varepsilon$ for all $g \in G \setminus F$ (here $d$ denotes any metric on $X$ that is compatible with the topology).  
We say that a continuous $G$-equivariant map $\tau \colon X \to X$  is
\emph{pre-injective} with respect to the action of $G$ if there is no pair of distinct $G$-homoclinic points  in $X$ having the same image under $\tau$.
When $X = A^G$ and $G$ acts on $X$ by the shift, this definition is equivalent to the one given above (see Proposition~\ref{p:homoclinic-configurations}).
We say that the dynamicall system $(X,G)$ has the \emph{Myhill property} if
every $G$-equivariant continuous map   $\tau \colon X \to X$   
that is pre-injective with respect to the action of $G$ 
is surjective.
\par
Our main result  is the following. 

\begin{theorem}
\label{t:myhill-hyp}
Let $X$ be a  compact metrizable space equipped  with a continuous action of a countable amenable group $G$. 
Suppose that  the dynamical system $(X,G)$ is expansive 
and that there exist a finite set $A$, a strongly irreducible subshift 
$\Sigma \subset A^G$, and a uniformly bounded-to-one factor map 
$\theta \colon \Sigma \to X$.
Then the dynamical system $(X,G)$ has the Myhill property.
\end{theorem}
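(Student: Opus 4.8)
The plan is to argue by contradiction using topological entropy with respect to a fixed F\o lner sequence $(F_n)$ for $G$, writing $h(\cdot)$ for the entropy of a $G$-system and $N_E(\cdot)$ for the number of patterns of a subshift on a finite set $E$. First I would record three standard facts: a factor map never increases entropy; a uniformly bounded-to-one factor map preserves entropy; and a proper subshift of a strongly irreducible subshift has strictly smaller entropy. In particular $h(X)=h(\Sigma)$ because $\theta$ is a uniformly bounded-to-one factor. Using expansiveness of $(X,G)$, I would replace $X$ by a topologically conjugate subshift $X\subset B^G$; then $\tau$ becomes a cellular automaton with a finite memory set, $\theta$ becomes a sliding block code, and by Proposition~\ref{p:homoclinic-configurations} the homoclinic pairs in both $\Sigma$ and $X$ are exactly the almost-equal pairs.

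Now suppose, for contradiction, that $\tau$ is pre-injective but not surjective, and set $Y:=\tau(X)$, a proper closed $G$-invariant subset of $X$. Non-surjectivity gives an entropy drop: $\Sigma_Y:=\theta^{-1}(Y)$ is a proper $G$-invariant subshift of $\Sigma$ (proper because $\theta$ is onto and $Y\neq X$), so $h(\Sigma_Y)<h(\Sigma)$; since $\theta|_{\Sigma_Y}\colon\Sigma_Y\to Y$ is again uniformly bounded-to-one, we get $h(Y)=h(\Sigma_Y)<h(\Sigma)=h(X)$, that is, $h(\tau(X))<h(X)$.

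The crux is to contradict this by showing that pre-injectivity forces $h(\tau(X))\ge h(X)$. I would work not on $X$ (which need not be strongly irreducible) but on the strongly irreducible cover, through the factor map $\psi:=\tau\circ\theta\colon\Sigma\to Y$. The point is that pre-injectivity of $\tau$ together with the bounded-to-one property of $\theta$ makes $\psi$ uniformly finite-to-one along each boundary class: if $u,u'\in\Sigma$ agree outside a finite set and $\psi(u)=\psi(u')$, then $\theta(u),\theta(u')$ are homoclinic with equal $\tau$-image, so pre-injectivity gives $\theta(u)=\theta(u')$, whence $u'$ lies in a $\theta$-fiber of size at most the bound $K$. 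I would then run the usual F\o lner counting on $\Sigma$: fix an exterior boundary condition $z$ outside $F_n$ and, using strong irreducibility of $\Sigma$ to glue across a $\Delta$-buffer, realize every interior pattern on the shrunken set $F_n^{-\Delta}$ as some $u_p\in\Sigma$ that agrees with $z$ outside $F_n$; the number of such interior patterns is $N_{F_n^{-\Delta}}(\Sigma)$, which grows at rate $h(\Sigma)$. Since $\psi$ is at most $K$-to-one on this family and, by locality of $\psi$, the images $\psi(u_p)$ all agree off a bounded neighborhood of $F_n$, the number of distinct image patterns over that neighborhood is at least $N_{F_n^{-\Delta}}(\Sigma)/K$. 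Taking logarithms, dividing by $|F_n|$, and letting $n\to\infty$ (the boundary terms and $\log K$ being subexponential by the F\o lner property) yields $h(\tau(X))\ge h(\Sigma)=h(X)$, contradicting the entropy drop above and forcing $\tau$ to be surjective.

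I expect the main obstacle to be exactly this last counting step. The difficulty is structural: the self-map $\tau$ lives on $X$, which carries no strong irreducibility, so the classical Myhill counting cannot be performed in its usual home and must instead be transported to $\Sigma$ through $\psi=\tau\circ\theta$, where pre-injectivity degrades from \emph{injective on homoclinic classes} to merely \emph{at most $K$-to-one on each boundary class}. The technical heart is to combine this weakened injectivity with the strong-irreducibility gluing and the locality of $\psi$ while keeping every boundary discrepancy subexponential, so that all error terms disappear in the entropy limit.
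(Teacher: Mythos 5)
Your global strategy (entropy contradiction, strict entropy drop for the proper subshift $\theta^{-1}(\tau(X))\subsetneq\Sigma$ via Proposition~\ref{p:entropy-increasing}, gluing with strong irreducibility to produce many configurations in one homoclinicity class, and dividing by the fiber bound $K$) is the same as the paper's, but your proof has a genuine gap at the step you flag as the reduction: ``using expansiveness of $(X,G)$, replace $X$ by a topologically conjugate subshift $X\subset B^G$.'' Expansiveness does \emph{not} imply conjugacy to a subshift: subshifts are totally disconnected, whereas the theorem is aimed precisely at connected examples such as hyperbolic toral automorphisms ($G=\Z$, $X=\T^n$), which are expansive and are uniformly bounded-to-one factors of mixing subshifts of finite type via Markov partitions, yet admit no conjugate symbolic model. (Conjugacy to a subshift would additionally require $X$ zero-dimensional.) Without that reduction, the objects your final count relies on --- ``$\tau$ is a cellular automaton,'' ``patterns of $\tau(X)$ on a finite set,'' the ``locality of $\psi$'' forcing distinct images to differ on a bounded neighborhood of $F_n$ --- do not exist, and the counting collapses exactly at what you correctly identify as the technical heart. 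The paper's proof keeps $X$ metric and substitutes a metric surrogate for each symbolic ingredient: uniform continuity of $\tau\circ\theta$ gives a finite $L\subset G$ with the approximate locality property \eqref{e:tau-pi-unif-cont} (agreement on $L$ implies images $\delta$-close, $\delta$ an expansiveness constant); pattern counting on $Y=\tau(X)$ is replaced by Bowen--Dinaburg spanning numbers $S(L\Delta F_n,\delta/2,Y,G,d)$ via Proposition~\ref{p:top-entropy-metric-int}; and the conclusion is reached by pigeonhole plus expansiveness rather than by pattern-distinctness: once $K^{-1}\card(\pi_{F_n}(\Sigma))$ exceeds the spanning number, two distinct glued points $x,y\in\theta(\Phi(n))$ have $\tau(x),\tau(y)$ within $\delta$ on $L\Delta F_n$ (triangle inequality through a common spanning point) and within $\delta$ off $L\Delta F_n$ (approximate locality), so $\tau(x)=\tau(y)$ by expansiveness, contradicting pre-injectivity directly.

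A secondary, repairable defect: you invoke ``uniformly bounded-to-one factor maps preserve entropy'' to get $h(X)=h(\Sigma)$ and $h(Y)=h(\Sigma_Y)$. This fact is nowhere established in the paper for actions of general countable amenable groups, and it is not needed: the paper uses only the one-sided inequality $h_{top}(Y,G)\leq h_{top}(\Psi,G)$ from Proposition~\ref{p:entropy-factor}, together with the crude fiber estimate $\card(\theta(\Phi(n)))\geq K^{-1}\card(\Phi(n))$ on the specific glued family. Your remaining ingredients --- the sets of configurations agreeing with a fixed $u$ off a $\Delta$-collar of $F_n$, the appeal to Proposition~\ref{p:strongly-implies-stab} for gluing against an infinite exterior, the $K^{-1}$ loss, and the transport of homoclinicity through $\theta$ via Propositions~\ref{p:homoclinic-configurations} and~\ref{p:homoclinic-factor} --- all match the paper; what must be excised is the symbolic model of $X$, replaced by the expansiveness/spanning-set mechanism above.
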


As the shift action on every  subshift $\Sigma \subset A^G$ is expansive,
we deduce from Theorem~\ref{t:myhill-hyp}
(by taking $\theta := \Id_\Sigma$, the identity map on $\Sigma$)
that if $G$ is a countable amenable group and $A$ is a finite set, then every strongly
irreducible subshift $\Sigma \subset A^G$ has the Myhill property.
This last result had been already established  by the 
authors~\cite[Theorem 1.1]{csc-myhill}.
In the particular case when $\Sigma = A^G$ is the full subshift, it yields the Myhill  implication in the Garden of Eden theorem for cellular automata over amenable groups 
established in~\cite{ceccherini}.  
\par
Theorem~\ref{t:myhill-hyp} had been previously obtained by the authors \cite[Theorem~1.1]{csc-goehyp}
when $G = \Z$ and 
$\Sigma \subset A^\Z$ is a topologically mixing subshift of finite type.
Actually, it is well known that a subshift of finite type $\Sigma \subset A^\Z$ is strongly irreducible if and only if it is topologically mixing. On the other hand,
there exist strongly irreducible subshifts $\Sigma \subset A^\Z$ that are not of finite type and  even not sofic (see e.g.~\cite{csc-periodic}), so that
the above result for $G = \Z$ is stronger than
the one in~\cite{csc-goehyp}.  
\par
According to Gromov \cite[Section~5]{gromov-hyperbolic-manifolds-groups-actions},
a dynamical system $(X,G)$ is  \emph{hyperbolic}
(or \emph{finitely presented}~\cite{fried})
 if it is expansive and a factor of some subshift of finite type.
 Thus, if the dynamical system $(X,G)$ satisfies the hypotheses 
 of Theorem~\ref{t:myhill-hyp}
 with $\Sigma$ of finite type, then $(X,G)$ is hyperbolic in the sense of Gromov.
 However, as already mentioned above, there are strongly irreducible subshifts over $\Z$ that are not sofic and hence not finitely presented. 
Consequently, there are dynamical systems $(X,G)$ satisfying all the hypotheses of Theorem~\ref{t:myhill-hyp}
without being hyperbolic in the sense of Gromov.  
\par 
Note also that there exist dynamical systems $(X,G)$
satisfying all the hypotheses of Theorem~\ref{t:myhill-hyp}
that do not have the \emph{Moore property}, i.e., admitting  continuous 
surjective $G$-equivariant maps 
$\tau \colon X \to X$ that are not pre-injective
(cf.~\cite{csc-goehyp}).
In the case $G = \Z$,
an example of such a dynamical system is provided by the even subshift
$X \subset \{0,1\}^\Z$ (see \cite[Section~3]{fiorenzi-sofic}).
\par
Following a terminology introduced by Gottschalk~\cite{gottschalk} 
(cf.~\cite{gromov-esav}),
let us say that a dynamical system $(X,G)$ is \emph{surjunctive} if every
injective $G$-equivariant continuous map $\tau \colon X \to X$ is surjective.

\begin{corollary}
\label{c:ds-is-surj}
Every dynamical system  $(X,G)$ satisfying the hypotheses of Theorem~\ref{t:myhill-hyp}
is surjunctive.
\end{corollary}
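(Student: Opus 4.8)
The plan is to deduce surjunctivity directly from the Myhill property established in Theorem~\ref{t:myhill-hyp}, the only thing to check being that, for a $G$-equivariant continuous self-map of $X$, injectivity is a stronger condition than pre-injectivity. I would begin by recalling the two notions side by side. By definition, $\tau \colon X \to X$ is \emph{pre-injective} (with respect to the action of $G$) exactly when there exists no pair of \emph{distinct} $G$-homoclinic points $x,y \in X$ with $\tau(x) = \tau(y)$; whereas $\tau$ is \emph{injective} exactly when there exists no pair of distinct points $x,y \in X$ (homoclinic or not) with $\tau(x) = \tau(y)$.

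The key step is the elementary implication injective $\Rightarrow$ pre-injective. Indeed, suppose $\tau$ is injective and, for contradiction, that it is not pre-injective. Then there would exist distinct $G$-homoclinic points $x,y \in X$ with $\tau(x) = \tau(y)$. But such a pair is in particular a pair of distinct points of $X$ with the same image under $\tau$, contradicting injectivity. Hence every injective $G$-equivariant continuous map $\tau \colon X \to X$ is automatically pre-injective with respect to the action of $G$.

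Finally, I would invoke Theorem~\ref{t:myhill-hyp}: under the hypotheses of the corollary, the dynamical system $(X,G)$ has the Myhill property, so every pre-injective $G$-equivariant continuous self-map of $X$ is surjective. Applying this to an arbitrary injective $\tau$ yields its surjectivity, which is precisely the assertion that $(X,G)$ is surjunctive. I do not expect any genuine obstacle here: the entire mathematical content is carried by Theorem~\ref{t:myhill-hyp}, and the corollary reduces to the purely set-theoretic observation that a map with no coincident pairs at all has, a fortiori, no coincident homoclinic pairs.
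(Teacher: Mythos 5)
Your proposal is correct and matches the paper's proof exactly: the paper's entire argument is the one-line observation that injectivity trivially implies pre-injectivity, after which Theorem~\ref{t:myhill-hyp} gives surjectivity. Your write-up merely spells out this same implication in more detail.
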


\begin{proof}
Injectivity trivially implies pre-injectivity.
\end{proof} 

The paper is organized as follows.
Section~\ref{sec:background} contains basic definitions and preliminary results.
The proof of Theorem~\ref{t:myhill-hyp}
is given in Section~\ref{sec:proof-main-result}.
It relies on an entropic argument.
In Section~\ref{sec:existence-homoclinic-pairs}, we prove that, in  a non-trivial dynamical system $(X,G)$ that is the quotient by a finite-to-one factor map of a strongly irreducible subshift, all $G$-homoclinicity classes are infinite
(Corollary~\ref{c:homoclinic-class-infinite}).
This applies in particular to any non-trivial dynamical system satisfying the hypotheses of Theorem~\ref{t:myhill-hyp} 
but arequires neither expansiveness of the system nor amenability of the acting group.
On the other hand, extending a result previously obtained by 
Schmidt~\cite{schmidt-pacific} in the case  
$G = \Z^d$, we show that if $G$ is a countable amenable group and $(X,G)$ is a dynamical system with positive topological entropy that is the quotient by a unfiormly bounded-to-one  factor map of a subshift of finite type,
then, for every integer $n \geq 1$, 
there is a $G$-homoclinicity class in $X$ containing more than $n$ points
(Corollary~\ref{c:X-G-factor-sft}).
Furthermore, generalizing a result also previously obtained 
by Schmidt~\cite{schmidt-pacific}
for $G =  \Z^d$, we prove that if $A$ is a finite set, $G$ a countable residually finite amenable group, and $\Sigma \subset A^G$
a subshift of finite type with zero topological entropy whose periodic configurations are dense,  then every $G$-homoclinicity class in $\Sigma$ is trivial.
In Section~\ref{sec:surj-density},  we prove the surjunctivity of expansive dynamical systems containing a dense set of periodic points.
This last result is  well known for subshifts and does not require amenability of the acting group.
The final section contains a description  of some examples of expansive dynamical systems that show the importance of the hypotheses in the above  results.
       
\section{Background and notation}
\label{sec:background}

In this section, we set up notation and collect basic facts that will be used in the sequel.
Some proofs are given for convenience.

\subsection{Dynamical systems}
The cardinality of a set $X$ is denoted  $\card(X)$.
A set $X$ is \emph{countable} if $\card(X) = \card(\N)$. 
Here $\N$ denotes the set of 
non-negative integers.
An action of a group $G$ on a set $X$ is a map $\alpha \colon G \times X \to X$ satisfying
$\alpha(g_1,\alpha(g_2,x)) = \alpha(g_1 g_2, x)$ and $\alpha(1_G, x) = x$ for all $g_1, g_2 \in G$ and $x \in X$, where $1_G$ denotes the identity element of $G$.
When the action $\alpha$ is clear from the context, we shall write $g x$ 
instead of $\alpha(g,x)$.
If a group $G$ acts on two sets $X$ and $Y$, a map $\varphi \colon X \to Y$ is called $G$-\emph{equivariant} if one has 
$\varphi(g x ) = g \varphi( x )$ for all $g \in G$ and $x \in X$.
An action of a group $G$ on a topological space $X$ is said to be \emph{continuous} if the permutation of  $X$  given by $x \mapsto g x$ is continuous for each $g \in G$.
\par   
Throughout  this paper,
by a \emph{dynamical system}, we shall mean a triple $(X,G,\alpha)$, where 
 $X$ is a compact metrizable space, $G$ is a countable group, 
   and  $\alpha$ is a continuous  action of $G$ on $X$.
   If there is no risk of confusion about the action, 
   we shall write $(X,G)$, or even sometimes simply $X$,  instead of $(X,G,\alpha)$. 
We shall denote by $d$   a metric  on $X$ that is compatible with the topology.
\par
Given a dynamical system $(X,G)$,
the \emph{orbit} of a point $x \in X$ is the set 
$\{g x  : g \in G\} \subset X$. The point $x$ is called \emph{periodic} if its orbit is finite.
The set $\Per(X,G) \subset X$ consisting of all periodic points of the dynamical system 
$(X,G)$  satisfies
\begin{equation}
\label{e:periodic-points-X-G}
\Per(X,G) = \bigcup_{H } \Fix(H),
\end{equation} 
where $H$ runs over all finite-index subgroups of $G$ 
and $\Fix(H)$ is the closed subset of $X$ consisting of all the points in $X$ that are fixed by $H$.
\par
A subset $Y \subset X$ is said to be \emph{invariant} if the orbit of every point of $Y$ is contained in $Y$.
If $Y \subset X$ is an invariant subset
then the action of $G$ on $X$ induces, by restriction, a continuous action of $G$ on $Y$.
\par
One says that the dynamical system  $(X,G)$ is \emph{expansive} if there exists a constant 
$\delta  > 0$ such that, for every pair of distinct points $x,y \in X$, there exists an element
$g = g(x,y)  \in G$ such that
$d(g x,g y) \geq  \delta$.
Such a constant $\delta$ is  called an \emph{expansiveness constant} for $(X,G,d)$.  
The fact that $(X,G)$ is expansive or not does not depend on the choice of the metric $d$.
Actually, the dynamical system $(X,G)$ is expansive if and only if there is a neighborhood $W \subset X \times X$ of the diagonal such that,
for every pair of distinct points $x,y \in X$,
there exists an element 
$g = g(x,y)  \in G$ such that
$(g x, g y) \notin W$.
Such a set $W$ is then called an
\emph{expansiveness set} for $(X,G)$. 
\par
One says that the dynamical system $(X,G)$  is \emph{topologically mixing} if, for any pair of nonempty open subsets $U$ and $V$ of $X$, there exists a finite subset $F \subset G$ such that $U  \cap gV \neq \varnothing$ for all $g \in G \setminus F$.
\par
Suppose that the group $G$ acts continuously on two compact metrizable spaces $X$  and $\widetilde{X}$.
\par
One says that the dynamical systems $(X,G)$ and 
$(\widetilde{X},G)$ are \emph{topologically conjugate} 
if there exists a $G$-equivariant homeomorphism 
$h \colon \widetilde{X} \to X$.
\par
One says that the dynamical system $(X,G)$ is a \emph{factor} of the dynamical system 
$(\widetilde{X},G)$ if there exists a $G$-equivariant continuous surjective map
$\theta \colon \widetilde{X} \to X$. 
Such a map $\theta$ is then called a \emph{factor map}.
A factor map $\theta \colon \widetilde{X} \to X$ is said to be \emph{finite-to-one} if the pre-image set 
$\theta^{-1}(x)$ is finite for each $x \in X$.
A finite-to-one factor map is said to be \emph{uniformly bounded-to-one} if there is an integer $K \geq 1$ such that
$\card(\theta^{-1}(x)) \leq K$ for all  $x \in X$. 

\subsection{Homoclinicity}
Let $X$ be a compact metrizable space equipped with a continuous action of a countable group $G$.
Let $d$ be a metric on $X$ compatible with the topology.
Two points $x, y \in X$ are called \emph{homoclinic} with respect to the action of $G$, 
 or more briefly  $G$-\emph{homoclinic}, if for every $\varepsilon > 0$, there is a finite subset $F \subset G$ such that 
$d(g x,g y) < \varepsilon $ for all $g \in G \setminus F$.
Homoclinicity defines an equivalence relation on $X$. 
By compactness of $X$, this equivalence relation  is independent  of the choice of the metric $d$. Its equivalence classes are called the $G$-\emph{homoclinicity classes} 
of $X$.

\begin{proposition}
\label{p:homoclinic-factor}
Let $\widetilde{X} $ and $X$ 
be  compact metrizable spaces, each equipped with a continuous action of a countable group $G$. 
Suppose that the dynamical system $(X,G)$ is a factor of the dynamical system $(\widetilde{X},G)$ and let
$\theta \colon \widetilde{X} \to X$ be a factor map.
Let $\widetilde{x}$ and $\widetilde{y}$ be points in $\widetilde{X}$ that are $G$-homoclinic.
 Then the points $x := \theta(\widetilde{x})$ and $y := \theta(\widetilde{y})$ are $G$-homoclinic.
\end{proposition}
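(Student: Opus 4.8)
The plan is to exploit the two defining features of a factor map---its $G$-equivariance and its continuity---together with the compactness of $\widetilde{X}$. Fix compatible metrics $\widetilde{d}$ on $\widetilde{X}$ and $d$ on $X$. The central observation is that, since $\widetilde{X}$ is compact metrizable, the continuous map $\theta$ is automatically \emph{uniformly} continuous. Thus, given any $\varepsilon > 0$, I can find $\eta > 0$ such that $\widetilde{d}(\widetilde{p},\widetilde{q}) < \eta$ implies $d(\theta(\widetilde{p}),\theta(\widetilde{q})) < \varepsilon$ for all $\widetilde{p},\widetilde{q} \in \widetilde{X}$.

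First I would unwind the hypothesis that $\widetilde{x}$ and $\widetilde{y}$ are $G$-homoclinic, applied to this $\eta$: there is a finite subset $F \subset G$ with $\widetilde{d}(g\widetilde{x}, g\widetilde{y}) < \eta$ for every $g \in G \setminus F$. The $G$-equivariance of $\theta$ then gives $\theta(g\widetilde{x}) = g\theta(\widetilde{x}) = gx$ and, likewise, $\theta(g\widetilde{y}) = gy$, so that for each such $g$ the uniform-continuity estimate yields $d(gx, gy) = d(\theta(g\widetilde{x}),\theta(g\widetilde{y})) < \varepsilon$. Since $\varepsilon > 0$ was arbitrary and the finite set $F$ depends only on $\varepsilon$, this is precisely the statement that $x$ and $y$ are $G$-homoclinic.

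There is no genuine obstacle here; the one point that must not be overlooked is that pointwise continuity of $\theta$ would not suffice, since we need a single modulus $\eta$ valid \emph{simultaneously} at all the translated pairs $(g\widetilde{x}, g\widetilde{y})$ as $g$ ranges over the infinite set $G \setminus F$. It is exactly the compactness of $\widetilde{X}$ that upgrades continuity to uniform continuity and thereby supplies such a uniform $\eta$. One could also phrase the whole argument in terms of a neighbourhood of the diagonal rather than an explicit metric, which makes manifest that the conclusion does not depend on the choices of $\widetilde{d}$ and $d$, as already guaranteed by the remark following the definition of homoclinicity.
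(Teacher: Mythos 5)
Your proposal is correct and follows essentially the same route as the paper's proof: uniform continuity of $\theta$ (from compactness of $\widetilde{X}$) supplies the modulus $\eta$, and $G$-equivariance transfers the homoclinicity estimate from $(\widetilde{x},\widetilde{y})$ to $(x,y)$. Your closing remark correctly identifies why mere pointwise continuity would not suffice, which is exactly the point the paper's use of uniform continuity is addressing.
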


\begin{proof}
Let $\widetilde{d}$ (resp. $d$) be  a metric on $\widetilde{X}$ (resp. $X$) that is compatible with the topology.
\par
Let $\varepsilon > 0$.
By compactness,  $\theta$ is uniformly continuous.
Therefore, 
there exists $\eta > 0$ such that
\begin{equation}
\label{e:pi-eta-unif}
\widetilde{d}(\widetilde{x}_1,\widetilde{x}_2) < \eta  \Rightarrow d(\theta(\widetilde{x}_1),\theta(\widetilde{x}_2)) <  \varepsilon
\end{equation}
for all $\widetilde{x}_1,\widetilde{x}_2 \in \widetilde{X}$.
Since the points $\widetilde{x}$ and $\widetilde{y}$ are $G$-homoclinic,
there is a finite subset $F \subset G$ such that
$\widetilde{d}(g \widetilde{x}, g \widetilde{y}) < \eta$ for all $g \in G \setminus F$.
It follows that, for all $g \in G \setminus F$,
\begin{align*}
d(g x,g y) 
&= d(g\theta(\widetilde{x}),g \theta(\widetilde{y})) \\ 
&= d(\theta( g \widetilde{x}),\theta(g \widetilde{y})) && \text{(since $\theta$ is $G$-equivariant)} \\
&< \varepsilon && \text{(by \eqref{e:pi-eta-unif})}. 
\end{align*}
This shows that the points $x$ and $y$ are $G$-homoclinic.
\end{proof}

\subsection{Amenability}
A countable group $G$
 is called \emph{amenable} if there exists a sequence 
$ (F_n)_{n \in \N}$ of non-empty finite subsets of $G$  satisfying, for all $g \in G$,
\begin{equation}
\label{e:folner-net}
\lim_{n \to \infty}  \frac{\card( F_n \setminus F_n g)}{\card( F_n )} = 0. 
\end{equation}
  Such a sequence $(F_n)_{n \in \N}$ is  called a \emph{F\o lner sequence} for $G$.
\par
There are many other equivalent definitions of amenability in the literature
(see e.g. the monographs
\cite{greenleaf}, \cite{paterson},  \cite{csc-book} and the references therein). 
 \par
All locally finite groups, all solvable groups (and therefore all abelian groups), and all finitely generated groups of subexponential growth are amenable.
The free group of rank $2$ provides an example of a non-amenable group.
As the class of amenable groups is closed under taking subgroups, it follows that  if a group $G$ contains a nonabelian free subgroup then $G$ is not amenable.
\par
We shall  use the following well known results about F\o lner sequences.

\begin{proposition}
\label{p:folner-times-finite-set}
Let $G$ be a countable amenable group and let $(F_n)_{n \in \N}$ be a F\o lner sequence for $G$.
Let $E$ be a non-empty finite subset of $G$.
Then the following hold:
\begin{enumerate}[\rm (i)]
\item
one has
  \begin{equation}
  \label{e:lim-folner-infinite}
  \lim_{n \to \infty} \card(F_n) = \infty ;
  \end{equation} 
\item
the sequence $(E F_n)_{n \in \N}$ is a F\o lner sequence for $G$;
\item
one has
\begin{equation}
\label{p:folner-times-finite-set-iii}
\lim_{n \to \infty} \frac{\card( F_n E \setminus F_n)}{\card(F_n)} = 0 ;
\end{equation} 
\item
one has
\begin{equation}
\label{p:folner-times-finite-set-iv}
\lim_{n \to \infty} \frac{\card( F_n E)}{\card(F_n)} = 1.
\end{equation}
\end{enumerate}
\end{proposition}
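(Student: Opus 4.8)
The plan is to treat the four statements essentially independently, deducing only (iv) from (iii), and in each case reducing everything to the defining Følner property \eqref{e:folner-net} together with the elementary bijectivity of right translations. I note at the outset that, under the convention $\card(G)=\card(\N)$ adopted here, the group $G$ is \emph{infinite}; this is what makes (i) true and is used only there. I would dispose of (i) first, since it is the one part whose proof is not a direct one-line estimate.

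For (i), I would argue by contradiction. If $\card(F_n)$ does not tend to infinity, then some value $m$ is attained by $\card(F_n)$ for infinitely many $n$, so after passing to a subsequence I may assume $\card(F_{n_k}) = m$ for all $k$. Fix $g \in G$. Since $\card(F_{n_k}\setminus F_{n_k}g)$ is a non-negative integer and $\card(F_{n_k}\setminus F_{n_k}g)/m \to 0$ by \eqref{e:folner-net}, this integer is eventually $0$, so $F_{n_k}\subseteq F_{n_k}g$, and equality of the two finite cardinalities forces $F_{n_k} = F_{n_k}g$ for all large $k$ (with a threshold depending on $g$). Now, using that $G$ is infinite, I choose $m+1$ distinct elements $g_0 = 1_G, g_1,\dots,g_m \in G$ and take $k$ large enough that $F_{n_k} = F_{n_k} g_i$ for every $i$. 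Fixing $a \in F_{n_k}$, the $m+1$ elements $a g_0,\dots, a g_m$ are distinct and all lie in $F_{n_k}$, contradicting $\card(F_{n_k}) = m$. This handles torsion groups as well, where one cannot simply iterate a single infinite-order element.

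For (ii) I would show the set inclusion $EF_n \setminus EF_n g \subseteq E(F_n\setminus F_n g)$: writing an element of the left-hand side as $ef$ with $e\in E$, $f\in F_n$, the assumption $ef \notin EF_n g$ rules out $f\in F_n g$, so $f \in F_n\setminus F_n g$. Combined with $\card(EF_n)\ge \card(eF_n)=\card(F_n)$ for any $e\in E$, this yields $\card(EF_n\setminus EF_n g)/\card(EF_n) \le \card(E)\,\card(F_n\setminus F_n g)/\card(F_n)$, which tends to $0$ by \eqref{e:folner-net}, proving $(EF_n)_{n\in\N}$ is Følner. For (iii), I would use $F_n E\setminus F_n \subseteq \bigcup_{e\in E}(F_n e\setminus F_n)$ and the identity $\card(F_n e\setminus F_n) = \card(F_n\setminus F_n e^{-1})$ (apply the bijection given by right translation by $e^{-1}$). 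Summing over the finite set $E$ and invoking \eqref{e:folner-net} with $g = e^{-1}$ gives $\card(F_n E\setminus F_n)/\card(F_n)\to 0$. Finally, (iv) follows by squeezing: from $F_n E = (F_n E\cap F_n)\sqcup(F_n E\setminus F_n)$ and $F_n E \supseteq F_n e_0$ for a fixed $e_0\in E$ one gets $\card(F_n)\le \card(F_n E)\le \card(F_n)+\card(F_n E\setminus F_n)$, so dividing by $\card(F_n)$ and applying (iii) forces the ratio to $1$. The only genuinely delicate point is the counting argument in (i); the remaining parts are routine manipulations with the Følner ratios.
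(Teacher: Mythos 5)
Your proposal is correct. Parts (ii)--(iv) follow the paper's proof essentially verbatim: the same inclusion $EF_n \setminus EF_n g \subset E(F_n \setminus F_n g)$ together with $\card(EF_n)\geq\card(F_n)$ for (ii), the same covering $F_n E \setminus F_n \subset \bigcup_{e\in E}(F_n e \setminus F_n)$ for (iii), and the same squeeze $\card(F_n)\leq\card(F_nE)\leq\card(F_n)+\card(F_nE\setminus F_n)$ for (iv). The one harmless variant is in (iii): you identify $\card(F_ne\setminus F_n)$ with $\card(F_n\setminus F_ne^{-1})$ via right translation by $e^{-1}$ and invoke \eqref{e:folner-net} at $g=e^{-1}$, whereas the paper uses $\card(F_ne\setminus F_n)=\card(F_n\setminus F_ne)$, which holds simply because $F_ne$ and $F_n$ have equal cardinality; both are legitimate. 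Where you genuinely diverge is (i). The paper argues directly: for any $K$ it picks $R\subset G$ with $\card(R)=K^2$, observes that $\card(F_n\setminus F_ng)<\card(F_n)$ already forces $F_n\cap F_ng\neq\varnothing$, i.e.\ $g\in F_n^{-1}F_n$, so that eventually $R\subset F_n^{-1}F_n$ and hence $K^2\leq\card(F_n^{-1}F_n)\leq\card(F_n)^2$. You instead argue by contradiction: along a subsequence of constant cardinality $m$, the non-negative integer $\card(F_{n_k}\setminus F_{n_k}g)$ must eventually vanish, forcing the \emph{exact} equality $F_{n_k}=F_{n_k}g$, after which $m+1$ distinct right translates $ag_0,\dots,ag_m$ of a point $a\in F_{n_k}$ overfill the set. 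Both arguments are sound, and both use the infinitude of $G$ (your remark that the paper's convention $\card(G)=\card(\N)$ makes $G$ infinite is exactly the point). The trade-off: the paper's version needs only the weak non-disjointness consequence of the F\o lner condition and yields, for each $K$, an explicit threshold past which $\card(F_n)\geq K$, while yours requires passing to a subsequence but rests on the conceptually crisp fact that a finite set in an infinite group cannot be invariant under $m+1$ distinct right translations; your choice of finitely many auxiliary elements rather than the powers of a single element also correctly keeps the argument valid for torsion groups.
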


\begin{proof}
Let $K \in \N$. 
Since $G$ is infinite, we can find a finite subset $R \subset G$ with $\card(R) = K^2$.
It follows from~\eqref{e:folner-net} that, for each $g \in R$, there exists $N(g) \in \N$ such that 
$\card(F_n \setminus F_n g) < \card(F_n)$ for all $n \geq N(g)$.
This implies that $F_n$ meets $F_n g$ and hence  $g \in F_n^{-1} F_n$ 
for all $n \geq N(g)$.
If we take $N := \max_{g \in R} N(g)$, we then get $R \subset F_n^{-1} F_n$
for all $n \geq N$.
We deduce that
$$
K^2 = \card(R) \leq \card(F_n^{-1} F_n) \leq \card(F_n^{-1})\card(F_n) = (\card(F_n))^2
$$
and hence $\card(F_n) \geq K$ for all $n \geq N$. 
This shows~(i).
\par
For all $g \in G$,
we have  the inclusion
\begin{equation*}
E F_n  \setminus  E F_n  g 
\subset
 E (F_n \setminus F_n g).
\end{equation*}
This implies
\begin{equation*}
\card(E F_n  \setminus E F_n  g)
\leq \card(E)  \card ( F_n  \setminus  F_n g ).  
 \end{equation*}
As $\card(E F_n) \geq \card(F_n)$, this gives us
$$
\frac{\card(E F_n  \setminus E F_n  g)}{\card(E F_n)} 
\leq \card(E) \frac{\card( F_n \setminus F_n g)}{\card( F_n )} 
$$
and hence
$$
\lim_{n \to \infty} \frac{\card(E F_n  \setminus E F_n  g)}{\card(E F_n)} = 0
$$
by using~\eqref{e:folner-net}. This shows (ii).
\par
To prove (iii), we first observe that
$$
 F_n E \setminus F_n = 
\bigcup_{e \in E}(F_n e \setminus F_n) 
$$
so that 
\begin{align*}
\card(F_n E \setminus F_n) 
&= 
\card \left(\bigcup_{e \in E}(F_n e \setminus F_n)\right)\\
&\leq   \sum_{e \in E} \card(F_n e \setminus F_n)\\
&=  \sum_{e \in E} \card(F_n \setminus F_n e) 
&& \text{(since  $\card(F_n) = \card(F_n e$)}.
\end{align*}
and hence
\begin{equation}
\label{e:bound-for-boundary}
\frac{\card(F_n E \setminus F_n)}{\card(F_n)} \leq
\sum_{e \in E} \frac{\card(F_n \setminus F_n e)}{\card(F_n)}. 
\end{equation}
This shows (iii) since the right-hand side of~\eqref{e:bound-for-boundary} tends to $0$ as $n \to \infty$ by~\eqref{e:folner-net}.
\par
To prove (iv), we observe that
$$
F_n E \subset 
F_n \cup (F_n E \setminus F_n) 
$$
so that 
\begin{equation*}
\card(F_n E)
\leq \card(F_n \cup (F_n E \setminus F_n)) 
\leq  \card(F_n) + \card(F_n E \setminus F_n).
\end{equation*}
As $\card(F_n E) \geq \card(F_n)$, we deduce that
$$
1 \leq
\frac{\card( F_n E)}{\card(F_n)}
\leq 
1 + \frac{\card(F_nE \setminus F_n)}{\card(F_n)}. 
$$
This gives us  (iv) by using (iii). 
\end{proof}

\subsection{Topological entropy}
Consider a dynamical system $(X,G)$, where $X$ is a compact metrizable space and $G$ is a countable amenable group acting continuously on $X$.
\par
Let $\UU = (U_i)_{i \in I}$ be an open cover of $X$.
The \emph{cardinality} of $\UU$ is by definition the cardinality of its index set $I$.
One says that an open cover $\VV = (V_j)_{j \in J}$ is a \emph{subcover} of $\UU$ if 
$J \subset I$ and $V_j = U_j$ for all $j \in J$.
Since $X$ is compact,  $\UU$ admits a subcover with finite cardinality.
We denote by  $N(\UU)$  
 the smallest integer 
$n \geq 0$ such that $\UU$ admits a subcover with cardinality $n$.
For $g \in G$, we define the open cover $g \UU$ by $g \UU := (g U_i)_{i \in I}$.
\par
Let $\VV = (V_j)_{j \in J}$ be an open cover of $X$.
  The \emph{join}  of the open covers
 $\UU$ and $\VV$  
is the open cover $\UU \vee \VV$ of $x$ defined by   
 $\UU \vee \VV := (U_i \cap V_j)_{(i,j) \in I \times J}$.
 \par
 Given an open cover $\UU$ of $X$ and a non-empty finite subset  $F \subset G$,
 we define the open cover $\UU_F$  by
 $$
 \UU_F := \bigvee_{g \in F} g^{-1} \UU.
 $$
 Now let $\FF = (F_n)_{n \in \N}$ be a F\o lner sequence for $G$.
 It follows from the Ornstein-Weiss lemma as stated in
 in \cite[Theorem 6.1]{lindenstrauss-weiss} (see also  \cite{cscf-fekete} and the references therein) 
 that the limit
 $$
 h(\UU,X,G) := \lim_{n \to \infty} \frac{\log N(\UU_{F_n})}{\card(F_n)}
 $$
 exists, is finite, and does not depend on the choice of the 
 F\o lner sequence $\FF$ for $G$.
 \par
 The \emph{topological entropy} of the dynamical system $(X,G)$ is the 
 quantity $0 \leq h_{top}(X,G) \leq \infty$ given by
 $$
 h_{top}(X,G) := \sup_\UU  h(\UU,X,G),
$$
where $\UU$ runs over all open covers of $X$.
\par
The above definition of topological entropy was introduced by Adler, Konheim, and McAndrew~\cite{adler-entropy} in the case when  $G = \Z$. 
Let us now briefly review the metric approach to topological entropy that was developed 
independently by Bowen~\cite{bowen-periodic-TAMS-1971} and Dinaburg~\cite{dinaburg}
 for $G = \Z$.
\par 
Let $d$ be a metric on $X$ compatible with the topology.
\par
Given a non-empty finite subset  $F \subset G$,
we define the metric $d_F$ on $X$ by
\begin{equation}
\label{e:def-metric-bowen}
d_F(x,y) := \max_{g \in F} d(g x,g y) \quad \text{for all } x,y \in X.
\end{equation}
The metric $d_F$ is also compatible with the topology on $X$. 
Given  a real number $\varepsilon > 0$, 
one says that a subset $Z \subset X$ is an 
$(F,\varepsilon,X,G,d)$-\emph{spanning subset} of $X$ 
if for every $x \in X$, there exists $z \in Z$ such that
$d_F( x, z) < \varepsilon$.
By compactness, $X$ always contains a finite $(F,\varepsilon,X,G,d)$-spanning subset.  
Let $S(F,\varepsilon,X,G,d)$ denote the minimal cardinality of an $(F,\varepsilon,X,G,d)$-spanning subset $Z \subset X$.
\par
Let $\FF = (F_n)_{n \in \N}$ be a F\o lner sequence for $G$.
We define   $h(\FF,\varepsilon,X,G,d) $ by
\begin{equation}
\label{e:h-S-folner}
h(\FF,\varepsilon,X,G,d) := \limsup_{n \to \infty} \frac{\log S(F_n,\varepsilon,X,G,d)}{\card(F_n)}. 
\end{equation}
Observe that the map $\varepsilon \mapsto h(\FF,\varepsilon,X,G,d)$ is 
non-increasing.
This implies that
$$
\sup_{\varepsilon > 0} h(\FF,\varepsilon,X,G,d) = \lim_{\varepsilon \to 0} h(\FF,\varepsilon,X,G,d).
$$

\begin{proposition}
\label{p:top-entropy-metric-int}
With the notation above, one has
\begin{equation}
\label{e:def-top-entropy}
h_{top}(X,G) = 
\sup_{\varepsilon > 0} h(\FF,\varepsilon,X,G,d)
= \lim_{\varepsilon \to 0} h(\FF,\varepsilon,X,G,d).
\end{equation}
\end{proposition}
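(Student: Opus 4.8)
The plan is to prove the first equality in~\eqref{e:def-top-entropy}, namely $h_{top}(X,G) = \sup_{\varepsilon > 0} h(\FF,\varepsilon,X,G,d)$, since the second equality is merely the monotonicity statement recorded just above the proposition. I would establish the two inequalities separately, translating in each direction between finite subcovers of the joins $\UU_{F_n}$ and $(F_n,\varepsilon)$-spanning subsets, and controlling everything through the Bowen metrics $d_{F_n}$.

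For the inequality $h_{top}(X,G) \le \sup_{\varepsilon > 0} h(\FF,\varepsilon,X,G,d)$, I would fix an arbitrary open cover $\UU$ and invoke its Lebesgue number: there is $\lambda > 0$ such that every subset of $X$ of diameter $< \lambda$ lies in some member of $\UU$. Fixing $\varepsilon$ with $2\varepsilon < \lambda$ and a minimal $(F,\varepsilon,X,G,d)$-spanning set $Z$, the key observation is that for each $z \in Z$ the $d_F$-ball $\{x : d_F(x,z) < \varepsilon\}$ equals $\bigcap_{g \in F} g^{-1} B(gz,\varepsilon)$, and since each $B(gz,\varepsilon)$ has diameter at most $2\varepsilon < \lambda$ it is contained in some $U_{i_g} \in \UU$; hence this ball is contained in the single member $\bigcap_{g \in F} g^{-1} U_{i_g}$ of $\UU_F$. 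This exhibits a subcover of $\UU_F$ of cardinality at most $\card(Z)$, giving $N(\UU_F) \le S(F,\varepsilon,X,G,d)$. Passing to the F\o lner sequence yields $h(\UU,X,G) \le h(\FF,\varepsilon,X,G,d) \le \sup_{\varepsilon > 0} h(\FF,\varepsilon,X,G,d)$, and taking the supremum over $\UU$ completes this half.

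For the reverse inequality, I would fix $\varepsilon > 0$ and work with the single open cover $\VV$ of $X$ by all open balls of radius $\varepsilon/2$. Starting from a subcover of $\VV_F$ of cardinality $N(\VV_F)$, each member has the form $\bigcap_{g \in F} g^{-1} B(x_g,\varepsilon/2)$; choosing one point from every nonempty such member produces a set $Z$, and the triangle inequality shows that $Z$ is $(F,\varepsilon,X,G,d)$-spanning, so $S(F,\varepsilon,X,G,d) \le N(\VV_F)$. Here I must be slightly careful because $h(\FF,\varepsilon,X,G,d)$ is defined as a $\limsup$ whereas $h(\VV,X,G)$ is a genuine limit (by the Ornstein--Weiss lemma); the termwise inequality along $(F_n)$ nonetheless gives $h(\FF,\varepsilon,X,G,d) \le h(\VV,X,G) \le h_{top}(X,G)$, and since $\varepsilon$ was arbitrary the supremum over $\varepsilon$ is bounded by $h_{top}(X,G)$.

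The main obstacle is the geometric bookkeeping in the two comparison lemmas: both rely on the precise interaction between the join operation $\UU \mapsto \UU_F$ and the Bowen metric $d_F$, and in particular on choosing the radii together with the relation $2\varepsilon < \lambda$ so that balls of the relevant radius fall inside members of the cover. Once the two inequalities $N(\UU_F) \le S(F,\varepsilon,X,G,d)$ and $S(F,\varepsilon,X,G,d) \le N(\VV_F)$ are in place, the passage to the limit along the F\o lner sequence is routine, the only subtlety being the harmless replacement of the $\limsup$ defining $h(\FF,\varepsilon,X,G,d)$ by the limit guaranteed for $h(\UU,X,G)$.
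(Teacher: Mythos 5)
Your proof is correct and follows essentially the same route as the paper: one inequality via a Lebesgue number, showing a minimal spanning set yields a subcover of $\UU_F$ (i.e.\ $N(\UU_F) \le S(F,\cdot,X,G,d)$), and the other via the cover by small balls, extracting a spanning set from a minimal subcover (i.e.\ $S(F,\varepsilon,X,G,d) \le N(\VV_F)$), then passing to the F\o lner limit. The only difference is that you track the constants more explicitly (balls of radius $\varepsilon/2$, the relation $2\varepsilon < \lambda$, and the $\limsup$-versus-limit point), whereas the paper asserts the two comparison inequalities directly with radius $\varepsilon$ and Lebesgue number $\lambda$; your version is, if anything, slightly more careful on the factor-of-two bookkeeping.
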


\begin{proof}
To simplify notation, let us set
 $$
 H = \sup_{\varepsilon > 0} h(\FF,\varepsilon,X,G,d).
 $$
 Let $\UU$ be an open cover of  $X$.
Choose some Lebesgue number $\lambda > 0$ for $\UU$ with respect to the metric $d$.
We recall that this means that every open $d$-ball of $X$ with radius $\lambda$ is contained in some member of $\UU$.
Observe that,
for every non-empty finite subset  $F \subset G$,
the open cover $\UU_F$ admits $\lambda$ as a Lebesgue number with respect to the metric $d_F$. 
We deduce  that
$$
N(\UU_F) \leq S(F,\lambda,X,G,d).
$$
It follows  that
$$
h(\UU,X,G) \leq h(\FF,\lambda,X,G,d) \leq H
$$
and hence
$$
h_{top}(X,G) = \sup_\UU  h(\UU,X,G) \leq H.
$$
It remains only to prove that $H \leq h_{top}(X,G)$.
Let $\varepsilon > 0$ and consider the open cover $\UU$ of $X$ consisting of all open balls of radius $\varepsilon$.
Then, for every non-empty finite subset $F \subset G$, we clearly have
$$
S(F,\varepsilon,X,G,d) \leq N(\UU_F). 
$$
This gives us
$$
h(\FF,\varepsilon,X,G,d) \leq h(\UU,X,G) \leq h_{top}(X,G) 
$$
and hence
$$
H = \sup_{\varepsilon > 0} h(\FF,\varepsilon,X,G,d) \leq h_{top}(X,G).
$$
This completes the proof of~\eqref{e:def-top-entropy}.
\end{proof}

One of the key properties of topological entropy is that it never increases when taking factors. More precisely, we have the following. 

\begin{proposition}
\label{p:entropy-factor}
Let $G$ be a countable amenable group and let  $X$ (resp.~$\widetilde{X}$) be a compact metrizable space equipped with a continuous action of $G$.
Suppose that  the dynamical system $(X,G)$ is a factor of the dynamical system  
$(\widetilde{X},G)$. 
Then one has 
$h_{top}(X,G) \leq h_{top}(\widetilde{X},G)$.
\end{proposition}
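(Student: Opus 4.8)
The plan is to reduce everything to the metric description of topological entropy provided by Proposition~\ref{p:top-entropy-metric-int} and to exploit the uniform continuity of the factor map. Fix a factor map $\theta \colon \widetilde{X} \to X$, together with a metric $\widetilde{d}$ on $\widetilde{X}$ and a metric $d$ on $X$, both compatible with the respective topologies, and let $\FF = (F_n)_{n \in \N}$ be a F\o lner sequence for $G$. Since $\widetilde{X}$ is compact, $\theta$ is uniformly continuous, so given $\varepsilon > 0$ I can choose $\eta > 0$ such that $\widetilde{d}(\widetilde{x}_1, \widetilde{x}_2) < \eta$ implies $d(\theta(\widetilde{x}_1), \theta(\widetilde{x}_2)) < \varepsilon$ for all $\widetilde{x}_1, \widetilde{x}_2 \in \widetilde{X}$.

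The heart of the argument is the inequality $S(F, \varepsilon, X, G, d) \leq S(F, \eta, \widetilde{X}, G, \widetilde{d})$ for every non-empty finite subset $F \subset G$. To establish it, I would take an $(F, \eta, \widetilde{X}, G, \widetilde{d})$-spanning subset $\widetilde{Z} \subset \widetilde{X}$ of minimal cardinality and show that its image $\theta(\widetilde{Z})$ is an $(F, \varepsilon, X, G, d)$-spanning subset of $X$. Indeed, given $x \in X$, surjectivity of $\theta$ yields a point $\widetilde{x} \in \widetilde{X}$ with $\theta(\widetilde{x}) = x$, and the spanning property gives $\widetilde{z} \in \widetilde{Z}$ with $\widetilde{d}(g\widetilde{x}, g\widetilde{z}) < \eta$ for all $g \in F$. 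The choice of $\eta$ then forces $d(\theta(g\widetilde{x}), \theta(g\widetilde{z})) < \varepsilon$, and the $G$-equivariance of $\theta$ rewrites this as $d(gx, g\theta(\widetilde{z})) < \varepsilon$ for all $g \in F$; that is, $d_F(x, \theta(\widetilde{z})) < \varepsilon$ with $\theta(\widetilde{z}) \in \theta(\widetilde{Z})$. Hence $\theta(\widetilde{Z})$ is spanning, and consequently $S(F, \varepsilon, X, G, d) \leq \card(\theta(\widetilde{Z})) \leq \card(\widetilde{Z}) = S(F, \eta, \widetilde{X}, G, \widetilde{d})$.

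Once this inequality is in hand, the rest is routine bookkeeping with limits. Dividing by $\card(F_n)$, taking $\limsup$ as $n \to \infty$, and recalling the definition~\eqref{e:h-S-folner}, I obtain $h(\FF, \varepsilon, X, G, d) \leq h(\FF, \eta, \widetilde{X}, G, \widetilde{d}) \leq h_{top}(\widetilde{X}, G)$, where the last inequality comes from Proposition~\ref{p:top-entropy-metric-int} applied to $\widetilde{X}$. Finally, taking the supremum over $\varepsilon > 0$ and using Proposition~\ref{p:top-entropy-metric-int} applied to $X$ yields $h_{top}(X, G) \leq h_{top}(\widetilde{X}, G)$, as required.

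I do not expect any serious obstacle here. The one point deserving a little care is the combined use of uniform continuity and $G$-equivariance to propagate the $\eta$-closeness of $\widetilde{x}$ and $\widetilde{z}$ simultaneously along the entire orbit segment indexed by $F$ down to $\varepsilon$-closeness of their images; it is exactly the \emph{uniform} continuity of $\theta$, rather than mere continuity, that makes the threshold $\eta$ independent of the point and hence valid for all $g \in F$ at once.
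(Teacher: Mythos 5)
Your proof is correct and follows essentially the same route as the paper: uniform continuity of $\theta$ yields $\eta$ from $\varepsilon$, the image $\theta(\widetilde{Z})$ of a spanning set is shown to be spanning via surjectivity and $G$-equivariance, giving $S(F,\varepsilon,X,G,d) \leq S(F,\eta,\widetilde{X},G,\widetilde{d})$, and the conclusion follows from Proposition~\ref{p:top-entropy-metric-int}. You merely spell out the spanning-set verification that the paper leaves implicit, which is a harmless (indeed helpful) addition.
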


\begin{proof}
Let $\theta \colon \widetilde{X} \to X$ be a factor map
and let $\FF = (F_n)_{n \in \N}$ be a F\o lner sequence for $G$.
Let $\varepsilon > 0$ and
take $\widetilde{d}$,  $d$, and $\eta$ as in the proof of
Proposition~\ref{p:homoclinic-factor}.
\par 
Suppose that $F$ is a non-empty finite subset of $G$.
If $\widetilde{Z}$ is an
$(F,\eta,\widetilde{X},G,\widetilde{d})$-spanning subset of $\widetilde{X}$,
then $Z = \theta(\widetilde{Z})$
is  an $(F,\varepsilon,X,G,d)$-spanning subset of $X$.
It follows that
$$
S(F,\varepsilon,X,G,d) \leq S(F,\eta,\widetilde{X},G,\widetilde{d}). 
$$
This gives us
$$
S(\FF,\varepsilon,X,G,d) 
\leq S(\FF,\eta,\widetilde{X},G,\widetilde{d})
\leq h_{top}(\widetilde{X},G)
$$ 
and hence
$$
h_{top}(X,G) = \sup_{\varepsilon > 0} S(\FF,\varepsilon,X,G,d)
\leq h_{top}(\widetilde{X},G).
$$
\end{proof}

\subsection{Shifts and subshifts}
Let $G$ be a countable group and
let $A$ be a finite set.
We equip $A^G$   with its prodiscrete topology.
This is the product topology obtained by taking the discrete topology on each factor $A$ of $A^G$. The space $A^G$, being the product of countably many finite discrete spaces, is metrizable, compact, and totally disconnected. 
It is homeomorphic to the Cantor set as soon as  $A$ has more than one element.
\par
Let us choose a non-decreasing sequence $(R_n)_{n \in \N}$ of finite subsets of $G$ such that
$R_0 = \varnothing$, $R_1 = \{1_G\}$,  and $G = \bigcup_{n \in \N} R_n$.
Then the  metric $\rho$ on $A^G$, defined by
\begin{equation}
\label{e:def-metric-shift}
\rho(u,v) :=
\frac{1}{k},
\end{equation}
where 
$$
k := \sup \{n \geq 1 \text{ such that } u(g) = v(g) \text{  for all } g \in R_{n - 1} \}
$$
 (with the usual convention $1/\infty = 0$), is compatible with the topology.
\par
The  $G$-shift action on $A^G$ 
is  expansive and admits  $\delta = 1$  
as an expansiveness constant with respect to the metric $\rho$.
Actually, the open set
$$
W := \{(u,v) \in A^G \times A^G : u(1_G) \not= v(1_G) \} \subset A^G \times A^G
$$
is an expansiveness set for $(A^G,G)$.

\begin{proposition}
\label{p:homoclinic-configurations}
Let $G$ be a countable group and
let $A$ be a finite set. 
Then two configurations $u, v \in A^G$ are $G$-homoclinic  if and only if they are almost equal.
\end{proposition}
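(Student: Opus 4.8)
The plan is to prove both implications directly from the definitions, using the specific metric $\rho$ on $A^G$ defined in~\eqref{e:def-metric-shift} together with the fact that the shift action is expansive with the set $W = \{(u,v) : u(1_G) \neq v(1_G)\}$ as an expansiveness set. The statement to prove is that $u,v \in A^G$ are $G$-homoclinic if and only if they are almost equal, i.e., coincide outside a finite subset of $G$.

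First I would prove the easier implication: \emph{almost equal $\Rightarrow$ $G$-homoclinic}. Suppose $u$ and $v$ agree outside a finite set $S \subset G$, so that $u(g) \neq v(g)$ implies $g \in S$. Given $\varepsilon > 0$, I want to produce a finite $F \subset G$ with $\rho(gu, gv) < \varepsilon$ for all $g \in G \setminus F$. The point is that $\rho(gu, gv)$ is small exactly when $gu$ and $gv$ agree on a large ball $R_{k-1}$; since $(gu)(h) = u(g^{-1}h)$, the configurations $gu$ and $gv$ disagree at $h$ precisely when $g^{-1}h \in S$, i.e., when $h \in gS$. So $gu$ and $gv$ agree on all of $R_{k-1}$ as soon as $gS \cap R_{k-1} = \varnothing$, equivalently $g \notin R_{k-1} S^{-1}$. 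Taking $F := R_{k-1} S^{-1}$ for $k$ chosen so that $1/k < \varepsilon$ gives a finite set outside which $\rho(gu, gv) \leq 1/k < \varepsilon$, as required.

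Next I would prove the contrapositive of the harder direction: \emph{not almost equal $\Rightarrow$ not $G$-homoclinic}. Suppose $u$ and $v$ differ on an infinite set $T = \{g \in G : u(g) \neq v(g)\}$. I claim they are not homoclinic, using the expansiveness set $W$. For each $t \in T$, consider $g_t := t^{-1}$; then $(g_t u)(1_G) = u(t) \neq v(t) = (g_t v)(1_G)$, so $\rho(g_t u, g_t v) = 1$, since $g_t u$ and $g_t v$ already disagree at the basepoint $1_G = R_1 \setminus R_0$. Because $T$ is infinite, the set $\{g_t : t \in T\} = T^{-1}$ is infinite, so there are infinitely many $g \in G$ with $\rho(gu, gv) = 1 \geq \varepsilon$ for, say, $\varepsilon = 1/2$. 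This contradicts the homoclinicity condition, which would require all but finitely many $g$ to satisfy $\rho(gu, gv) < \varepsilon$.

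I do not anticipate a serious obstacle here; both directions are elementary manipulations of the definition of $\rho$ and the shift action. The only point requiring mild care is the bookkeeping with right-translates and inverses when tracking where $gu$ and $gv$ disagree — namely the identity $\{h : (gu)(h) \neq (gv)(h)\} = gT$ — so I would state this translation carefully once and reuse it in both implications. The result is in fact independent of any assumption on $G$ beyond countability, and uses only that the single-site disagreement at $1_G$ forces $\rho$-distance $1$, which is precisely the expansiveness of the shift.
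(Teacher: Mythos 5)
Your proof is correct and takes essentially the same route as the paper: the forward direction is the same translation bookkeeping (your finite bad set $R_{k-1}S^{-1}$ plays the role of the paper's $F R_k^{-1}$, the difference being only the convention $g$ versus $g^{-1}$), and your contrapositive argument in the reverse direction is logically identical to the paper's direct observation that $\rho(gu,gv)\leq 1/2$ off a finite set $F$ forces $(gu)(1_G)=(gv)(1_G)$ and hence agreement of $u$ and $v$ outside $F^{-1}$.
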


\begin{proof}
Suppose first that  $u$ and $v$ are almost equal.
Then there exists a finite subset $F \subset G$
such that $u(g) = v(g)$ for all $g \in G \setminus F$.
Let $\varepsilon > 0$ and choose a positive integer $k$ such that $1/(k+1) < \varepsilon$.
If $g \in G$ is such that $g^{-1} F$ does not meet the set $R_k$,
that is, $g \notin F R_{k}^{-1}$, then $u(g) = v(g)$.
As the set $F R_{k}^{-1}$ is finite, this shows that  $u$ and $v$ are $G$-homoclinic.
\par
Conversely, suppose that $u$ and $v$ are $G$-homoclinic.
Then we can find a finite subset $F$ of $G$  such that $\rho(g u,g v) \leq 1/2$ for all
$g \in G \setminus F$.
This implies that $u$ and $v$ coincide outside of $F^{-1}$.
Consequently, the configurations $u$ and $v$ are almost equal.
\end{proof}

A  $G$-invariant closed subset $\Sigma \subset A^G$ is called a
\emph{subshift}.
\par
Let $\Delta$ be a finite subset of $G$.
Given  subsets $\Omega_1$ and $\Omega_2$ of $G$, one says that $\Omega_1$ is
$\Delta$-\emph{apart}
from $\Omega_2$ if one has $\Omega_1 \Delta \cap \Omega_2 = \varnothing$, i.e.,
there is no element $g \in \Delta$ such that  
the right-translate by $g$ of $\Omega_1$ meets $\Omega_2$.
A subshift $X \subset A^G$ is called $\Delta$-\emph{irreducible} if it satisfies the following condition:
given  any two finite subsets $\Omega_1, \Omega_2 \subset G$  such that $\Omega_1$ is 
$\Delta$-apart
from $\Omega_2$ and any two configurations  $u_1, u_2\in \Sigma$, there exists a configuration $u \in \Sigma$ which coincides
with $u_1$ on $\Omega_1$ and 
with $u_2$ on $\Omega_2$. 
A subshift $\Sigma \subset A^G$ is called \emph{strongly irreducible} if there exists a finite subset 
$\Delta \subset G$  such that $\Sigma$ is $\Delta$-irreducible.
\par
We shall use the following result.

\begin{proposition}
\label{p:strongly-implies-stab}
Let $G$ be a countable group and let $A$ be a finite set.
Let $\Delta$ be a finite subset of $G$ and let $\Sigma \subset A^G$ be a 
$\Delta$-irreducible subshift.
Suppose that 
$\Omega_1$ and $\Omega_2$   are  (possibly infinite) subsets of $G$
such that $\Omega_1$ is  $\Delta$-apart from $\Omega_2$. 
Then, given any two configurations $u_1$ and $ u_2$ in  $\Sigma$, there is a configuration 
$u$ in  $\Sigma$ 
which coincides with $u_1$ on $\Omega_1$ and with $u_2$ on $\Omega_2$.
\end{proposition}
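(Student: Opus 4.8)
The plan is to reduce the (possibly infinite) case to the finite case built into the definition of $\Delta$-irreducibility by means of a compactness argument, exploiting that $\Sigma$, being a closed subset of the compact space $A^G$, is itself compact.

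First I would record the elementary but crucial observation that $\Delta$-apartness is inherited by subsets: if $\Omega_1 \Delta \cap \Omega_2 = \varnothing$ and $\Omega_1' \subset \Omega_1$, $\Omega_2' \subset \Omega_2$, then $\Omega_1' \Delta \cap \Omega_2' \subset \Omega_1 \Delta \cap \Omega_2 = \varnothing$, so $\Omega_1'$ is again $\Delta$-apart from $\Omega_2'$. In particular, for every finite subset $F \subset G$, the finite sets $\Omega_1 \cap F$ and $\Omega_2 \cap F$ are $\Delta$-apart, so the finite version of $\Delta$-irreducibility applies to them.

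Next, for each finite subset $F \subset G$, I would introduce the set
$$
C_F := \{ u \in \Sigma : u(g) = u_1(g) \text{ for all } g \in \Omega_1 \cap F, \text{ and } u(g) = u_2(g) \text{ for all } g \in \Omega_2 \cap F \}.
$$
Since the point evaluations $u \mapsto u(g)$ are continuous on $A^G$ and $A$ is discrete, each $C_F$ is a closed (indeed clopen) subset of $\Sigma$. Applying the definition of $\Delta$-irreducibility to the finite $\Delta$-apart pair $(\Omega_1 \cap F, \Omega_2 \cap F)$ and to the configurations $u_1, u_2$ produces a configuration lying in $C_F$, so $C_F \neq \varnothing$. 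Moreover, the family $(C_F)_F$ has the finite intersection property: for finitely many finite subsets $F_1, \dots, F_k$ one has $C_{F_1 \cup \cdots \cup F_k} \subset C_{F_1} \cap \cdots \cap C_{F_k}$, and the left-hand side is non-empty by what was just proved.

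Finally, by compactness of $\Sigma$ the intersection $\bigcap_F C_F$, taken over all finite subsets $F \subset G$, is non-empty, and any $u$ in this intersection is the desired configuration: for each $g \in \Omega_1$, taking $F = \{g\}$ shows $u(g) = u_1(g)$, and likewise $u(g) = u_2(g)$ for each $g \in \Omega_2$. I do not expect any genuine obstacle here; the only points requiring care are verifying that subsets preserve $\Delta$-apartness and that the finite intersection property holds, after which the compactness of $\Sigma$ does the rest.
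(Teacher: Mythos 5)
Your argument is correct in every step: $\Delta$-apartness is indeed inherited by subsets, each set $C_F$ is a nonempty closed (in fact clopen) subset of $\Sigma$ by the finite form of $\Delta$-irreducibility applied to the $\Delta$-apart pair $(\Omega_1 \cap F, \Omega_2 \cap F)$, the family $(C_F)_F$ has the finite intersection property because $C_{F_1 \cup \cdots \cup F_k} \subset C_{F_1} \cap \cdots \cap C_{F_k}$, and compactness of $\Sigma$ (a closed subset of the compact space $A^G$) yields a point of $\bigcap_F C_F$, which is exactly the desired configuration. The comparison with the paper is somewhat one-sided, however: the paper does not give an argument at all, but merely observes that the proposition is a particular case of \cite[Lemma~4.6]{csc-myhill}. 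So your proof is not ``the same approach'' as the paper's---it supplies a self-contained justification of a statement the paper delegates to an external reference. The compactness/finite-intersection-property mechanism you use is the standard way such finite-to-infinite exhaustion statements are proved, and is in substance how the cited lemma is established; your write-up could therefore serve as an inline proof making the paper self-contained on this point. An equivalent variant would take an exhausting sequence of finite sets and extract a convergent subsequence of the configurations produced at each finite stage, but your formulation via closed sets and compactness is cleaner and requires no explicit limit-taking.
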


\begin{proof}
This is a particular case of \cite[Lemma 4.6]{csc-myhill}.
\end{proof}

A subshift $\Sigma \subset A^G$ is said to be of \emph{finite type} if there exist a finite subset 
$\Omega \subset G$ and a subset $\PP \subset A^\Omega$ such 
that 
\begin{equation}
\label{e:def-sft}
\Sigma = \{u \in A^G : (g u)\vert_\Omega \in \PP \text{  for all  } g \in G\}.
\end{equation}
\par
A subshift $\Sigma \subset A^G$ is called \emph{sofic} if there exist a finite set $B$ and a subshift of finite type $\Psi \subset B^G$ such that $\Sigma$ is a factor of $\Psi$. 

\subsection{Topological entropy of subshifts}
Let $G$ be a countable group and $A$ a finite set.
Given a subset $F \subset G$, le $\pi_F \colon A^G \to A^F$ denote the canonical projection map, i.e., the map defined by $\pi_F(u) = u\vert_F$ for all $u \in A^G$, where 
$u\vert_F \in A^F$ denotes the restriction of $u$ to $F$. 

\begin{proposition}
\label{p:top-entropy-subshift}
Let $G$ be a countable amenable group and $A$ a finite set.
Let $\Sigma \subset A^G$ be a subshift.
Let $\FF = (F_n)_{n \in \N}$ be a 
F\o lner sequence for $G$.
Then the topological entropy of $\Sigma$  satisfies
\begin{equation}
\label{e:entropy-sub-shift}
h_{top}(\Sigma,G) = \lim_{n \to \infty} \frac{\log \card(\pi_{F_n^{-1}}(\Sigma))}{\card(F_n)}.
\end{equation}
\end{proposition}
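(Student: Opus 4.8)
The plan is to prove~\eqref{e:entropy-sub-shift} directly from the open-cover definition $h_{top}(\Sigma,G) = \sup_\UU h(\UU,\Sigma,G)$ by exhibiting a single clopen cover that is generating and whose entropy is exactly the right-hand side. For $a \in A$ write $[a] := \{u \in \Sigma : u(1_G) = a\}$ and consider the finite clopen cover $\UU := ([a])_{a \in A}$ of $\Sigma$. First I would unwind the joins: for $g \in G$ one has $g^{-1}[a] = \{u \in \Sigma : u(g^{-1}) = a\}$, so that for a non-empty finite $F \subset G$ the non-empty members of $\UU_F = \bigvee_{g \in F} g^{-1}\UU$ are exactly the cylinders $\{u \in \Sigma : u\vert_{F^{-1}} = p\}$ indexed by the patterns $p \in \pi_{F^{-1}}(\Sigma)$. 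Since these cylinders are pairwise disjoint, clopen, and cover $\Sigma$, no proper subfamily can cover $\Sigma$, whence $N(\UU_F) = \card(\pi_{F^{-1}}(\Sigma))$. Passing to the limit along $\FF$ gives $h(\UU,\Sigma,G) = \lim_{n} \log\card(\pi_{F_n^{-1}}(\Sigma))/\card(F_n) =: L$, and since $\UU$ is one particular cover, $L \le h_{top}(\Sigma,G)$.

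For the reverse inequality I would show that $\UU$ is generating, i.e.\ that every open cover $\VV$ of $\Sigma$ satisfies $h(\VV,\Sigma,G) \le L$. Fix a Lebesgue number $\lambda > 0$ for $\VV$ with respect to the metric $\rho$ of~\eqref{e:def-metric-shift} and pick $m$ with $1/(m+1) < \lambda$. If two configurations agree on $R_m$ then $\rho(u,v) \le 1/(m+1) < \lambda$, so each cylinder based on $R_m$ has $\rho$-diameter $< \lambda$ and is therefore contained in a member of $\VV$. As the family of these cylinders is precisely $\UU_E$ with $E := R_m^{-1}$, the cover $\UU_E$ refines $\VV$, and hence $h(\VV,\Sigma,G) \le h(\UU_E,\Sigma,G)$.

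It then remains to check that refining $\UU$ to $\UU_E$ does not change its entropy, i.e.\ $h(\UU_E,\Sigma,G) = h(\UU,\Sigma,G) = L$; this is the step that requires the most care. Using commutativity and idempotency of the join one computes $(\UU_E)_{F} = \UU_{EF}$, so that $N((\UU_E)_{F_n}) = N(\UU_{EF_n})$. By Proposition~\ref{p:folner-times-finite-set}(ii) the sequence $(EF_n)_{n \in \N}$ is again a F\o lner sequence, so by the F\o lner-independence of $h(\UU,\Sigma,G)$ one has $\lim_n \log N(\UU_{EF_n})/\card(EF_n) = h(\UU,\Sigma,G)$; combining this with $\card(EF_n)/\card(F_n) \to 1$ from Proposition~\ref{p:folner-times-finite-set}(iv) yields $h(\UU_E,\Sigma,G) = \lim_n \log N(\UU_{EF_n})/\card(F_n) = L$. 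Taking the supremum over all covers $\VV$ then gives $h_{top}(\Sigma,G) \le L$, which together with the first paragraph proves~\eqref{e:entropy-sub-shift}. The main obstacle is precisely this invariance argument: one should avoid expanding $\pi_{F_n^{-1}R_m}$ directly in terms of $\pi_{F_n^{-1}}$, since controlling the resulting error term would require a \emph{left} F\o lner property of the inverse sequence $(F_n^{-1})$ that need not follow from the right F\o lner condition~\eqref{e:folner-net}; routing the estimate through Proposition~\ref{p:folner-times-finite-set}(ii) and~(iv) sidesteps this difficulty entirely.
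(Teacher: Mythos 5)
Your cover-based strategy is sound in its first two stages: the identification $N(\UU_F) = \card(\pi_{F^{-1}}(\Sigma))$ for the symbol partition $\UU$, the Lebesgue-number argument showing every open cover is refined by $\UU_E$ with $E = R_m^{-1}$, and the identity $N((\UU_E)_F) = N(\UU_{EF})$ are all correct, and this is a genuinely different route from the paper, which works with $(F,\varepsilon)$-spanning sets and Proposition~\ref{p:top-entropy-metric-int} instead of directly with covers. The gap is in the last step: you invoke Proposition~\ref{p:folner-times-finite-set}(iv) to get $\card(EF_n)/\card(F_n) \to 1$, but (iv) asserts $\card(F_nE)/\card(F_n) \to 1$ for the \emph{right} translates $F_nE$, while you need the \emph{left} translates $EF_n$. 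These are not interchangeable: condition~\eqref{e:folner-net} is a right F\o lner condition, and having $\card(EF_n)/\card(F_n)\to 1$ for every finite $E \ni 1_G$ is precisely the statement that $(F_n)$ is a left F\o lner sequence, since $\card(EF_n) \geq \card(F_n) + \card(gF_n \setminus F_n)$ for $g \in E$. A right F\o lner sequence need not be left F\o lner: in the solvable Baumslag--Solitar group $G = \langle a,t \mid tat^{-1} = a^2\rangle \cong \Z[1/2]\rtimes\Z$, with $(x,k)(y,l) = (x+2^k y, k+l)$, the sets $F_n = \{(x,k) : x \in \Z,\ 0 \le x < 2^n,\ 0 \le k < n\}$ satisfy~\eqref{e:folner-net}, yet $\card(\{1_G,t\}F_n)/\card(F_n) = 1 + (n+1)/(2n) \to 3/2$. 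So for such an $\FF$ your factorization $\log N(\UU_{EF_n})/\card(F_n) = \bigl(\log N(\UU_{EF_n})/\card(EF_n)\bigr)\cdot\bigl(\card(EF_n)/\card(F_n)\bigr)$ does not yield $h(\UU_E) = L$. Contrary to your closing remark, routing through (ii) and (iv) does not sidestep the left-invariance difficulty you correctly identified; it silently reuses it.

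The gap is repairable, and the repair is exactly the symmetrization step in the paper's own proof, which you skipped. Since, by the Ornstein--Weiss lemma as invoked in the paper, both $h(\UU,\Sigma,G) = L$ and $h(\UU_E,\Sigma,G)$ are independent of the choice of F\o lner sequence, you may assume from the outset that $\FF$ is symmetric, i.e.\ $F_n = F_n^{-1}$ for all $n$ (such sequences exist by Namioka's result, cited just before the paper's proof). Then $\card(EF_n) = \card\bigl((EF_n)^{-1}\bigr) = \card(F_nE^{-1})$, and Proposition~\ref{p:folner-times-finite-set}(iv), applied to the finite set $E^{-1}$, gives $\card(EF_n)/\card(F_n) \to 1$; your computation $h(\UU_E) = L$ then goes through, and the rest of the argument is correct. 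This is the same maneuver the paper uses when it converts $\card(\pi_{F_n^{-1}R_k}(\Sigma))$ into $\card(\pi_{F_nR_k}(\Sigma))$ so that the right-product estimates (iii) and (iv) apply. With this one correction, your proof is a valid alternative to the paper's spanning-set argument.
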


\begin{proof}
We first observe that the existence of the limit in the right-hand side 
of~\eqref{e:entropy-sub-shift}
is again an immediate consequence of the Ornstein-Weiss lemma that was mentioned above.
Indeed,  let $\PP_{fin}(G)$ denote the set of all non-empty finite subsets of $G$.
Then, one immediately  checks that the map $\eta \colon \PP_{fin}(G) \to \N$ defined by
$\eta(F) := \card(\pi_{F^{-1}}(\Sigma))$ is
right-invariant
(i.e., $\eta(Fg) = \eta(F)$ for all $F \in \PP_{fin}(G)$ and $g \in G$) 
non-decreasing
(i.e., $\eta(F) \leq \eta(F')$ for all $F,F' \in \PP_{fin}(G)$ with $F \subset F'$),
and submultiplicative in the sense that  $\eta(F \cup F') \leq \eta(F) \eta(F')$ 
for all disjoint $F,F' \in \PP_{fin}(G)$.
Thus, the map $F \mapsto \log \eta(F)$ satisfies the hypotheses needed to apply the Ornstein-Weiss lemma \cite[Theorem~6.1]{lindenstrauss-weiss}. Consequently, the limit
$$
\lim_{n \to \infty} \frac{\log \card(\pi_{F_n^{-1}}(\Sigma))}{\card(F_n)}
$$
exists and does not depend on the choice of the F\o lner sequence $\FF$.  
\par
Now let us assume that the F\o lner sequence $\FF$ is \emph{symmetric}, i.e., satisfies
$F_n = F_n^{-1}$ for all $n \in \N$
(the existence of such a F\o lner sequence follows from \cite[Corollary~5.3]{namioka}).
Note that this implies in particular that
$$
\lim_{n \to \infty} \frac{\log \card(\pi_{F_n^{-1}}(\Sigma))}{\card(F_n)}
 =
 \lim_{n \to \infty} \frac{\log \card(\pi_{F_n}(\Sigma))}{\card(F_n)}.
$$
Equip $\Sigma$ with the metric $\rho$ defined by~\eqref{e:def-metric-shift}.
Let $F$ be a non-empty finite subset of $G$ and fix an integer $k \geq 1$.
It immediately follows from~\eqref{e:def-metric-shift} and~\eqref{e:def-metric-bowen} that
if two configurations $u,v \in \Sigma$ coincide on
$F^{-1} R_k$ then $\rho(u,v) < 1/k$
while $\rho(u,v) \geq 1/k$ if $u$ and $v$ do not coincide on $F^{-1} R_k$. 
This gives us
$$
S(F,1/k,\Sigma,G,\rho) = \card(\pi_{F^{-1} R_k}(\Sigma))
$$
and hence  
\begin{align}
\label{e:h-folner-for-ent-subs}
h(\FF,1/k,\Sigma,G,\rho) &= 
\limsup_{n \to \infty} \frac{\log \card(\pi_{F_n^{-1} R_k}(\Sigma))}{\card(F_n)}
&& \text{(by~\eqref{e:h-S-folner})} \\
&= \limsup_{n \to \infty} \frac{\log \card(\pi_{F_n R_k}(\Sigma))}{\card(F_n)}
&& \text{(since $\FF$ is symmetric)}. \notag
\end{align}
Now observe that $1_G \in R_k$ so that
$F_n \subset F_n R_k$.
We deduce that
$$
\card(\pi_{F_n }(\Sigma)) \leq
\card(\pi_{F_n R_k}(\Sigma)) \leq
\card(\pi_{F_n }(\Sigma)) \times \card\left(A^{F_n R_k \setminus F_n}\right).
$$
This yields$$
\frac{\log \card(\pi_{F_n }(\Sigma))}{\card(F_n)} \leq
\frac{\log \card(\pi_{F_n R_k}(\Sigma))}{\card(F_n)} \leq
\frac{\log \card(\pi_{F_n }(\Sigma))}{\card(F_n)}
+ \frac{\card(F_n R_k \setminus F_n)}{\card(F_n)}
$$
for all $n \in \N$.
Letting $n \to \infty$, this gives us
\begin{equation}
\lim_{n \to \infty} \frac{\log \card(\pi_{F_n R_k}(\Sigma))}{\card(F_n)} =
\lim_{n \to \infty} \frac{\log \card(\pi_{F_n}(\Sigma))}{\card(F_n)}
\end{equation}
since
$$
\lim_{n \to \infty} \frac{\card(F_n R_k \setminus F_n)}{\card(F_n)} = 0
$$
by~\eqref{p:folner-times-finite-set-iii}.
\par
By applying~\eqref{e:h-folner-for-ent-subs}, we then get
$$
h(\FF,1/k,\Sigma,G,\rho) =
\lim_{n \to \infty} \frac{\log \card(\pi_{F_n}(\Sigma))}{\card(F_n)}
$$
for all $k \geq 1$.
As
$$
h_{top}(\Sigma,G) = \lim_{k \to \infty}
h(\FF,1/k,\Sigma,G,\rho)
$$
by Proposition~\ref{p:top-entropy-metric-int},
this shows~\eqref{e:entropy-sub-shift}. 
\end{proof}

\begin{remark}
If the F\o lner sequence $\FF$ is symmetric, then
formula~\eqref{e:entropy-sub-shift} shows that $h_{top}(\Sigma,G)$
coincides with the entropy
 $\ent_\FF(\Sigma)$ defined  in~\cite{csc-myhill}.
 \end{remark}

We shall also need the following result about topological entropy of strongly irreducible subshifts.

\begin{proposition}
\label{p:entropy-increasing}
Let $G$ be a countable  amenable group, $A$ a finite set, and $\Sigma \subset A^G$ a strongly irreducible subshift.
Suppose that $\Psi \subset A^G$ is a subshift that is strictly contained in $\Sigma$.
Then one has $h_{top}(\Psi,G) < h_{top}(\Sigma,G) $. 
 \end{proposition}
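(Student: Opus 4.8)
The plan is to produce a single finite pattern that occurs in $\Sigma$ but nowhere in $\Psi$, and then to use strong irreducibility to show that inside every large F\o lner set a definite positive fraction of \emph{all} $\Sigma$-patterns must exhibit this pattern at each of linearly many well-separated sites. Since patterns of $\Psi$ must avoid it at every such site, the number of $\Psi$-patterns will be exponentially smaller than the number of $\Sigma$-patterns, and the entropy formula of Proposition~\ref{p:top-entropy-subshift} will turn this into the desired strict inequality. To begin, fix a finite $\Delta\subset G$ with $\Delta=\Delta^{-1}\ni 1_G$ witnessing $\Delta$-irreducibility of $\Sigma$. As $\Psi$ is closed and $\Psi\subsetneq\Sigma$, pick $u_0\in\Sigma\setminus\Psi$; by definition of the prodiscrete topology there is a finite $E\subset G$ such that the pattern $q:=u_0\vert_E$ is the restriction to $E$ of no element of $\Psi$. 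Since $\Psi$ is $G$-invariant, for every $t\in G$ no configuration of $\Psi$ restricts on $tE$ to the corresponding translate $q^{(t)}:=(tu_0)\vert_{tE}$ of $q$; meanwhile $q$ does occur in $\Sigma$. (If $\Psi=\varnothing$ the statement is trivial, so we assume $\Psi\neq\varnothing$.)

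Next I would set up the packing. Fix a symmetric F\o lner sequence $\FF=(F_n)_{n\in\N}$, so that $\pi_{F_n}=\pi_{F_n^{-1}}$ and Proposition~\ref{p:top-entropy-subshift} reads $h_{top}(\cdot,G)=\lim_n \card(F_n)^{-1}\log\card(\pi_{F_n}(\cdot))$. Put $S:=(E\Delta)(E\Delta)^{-1}$, a symmetric subset of $G$ containing $1_G$, and inside each $F_n$ choose a \emph{maximal} subset $T_n\subseteq F_n$ with $t^{-1}t'\notin S$ for all distinct $t,t'\in T_n$. This makes the regions $\{tE\Delta : t\in T_n\}$ pairwise disjoint, and in particular the cores $\{tE : t\in T_n\}$ pairwise $\Delta$-apart. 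After discarding the boundary sites $t$ with $tE\Delta\not\subseteq F_n$ (whose number is $o(\card(F_n))$ by Proposition~\ref{p:folner-times-finite-set}), maximality together with the covering inclusion $F_n\subseteq T_nS$ gives $\card(T_n)\ge c\,\card(F_n)$ for large $n$, where $c:=1/\card(S)>0$.

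The heart of the argument, and the step I expect to be the main obstacle, is a conditional counting bound inside $\Sigma$. Put $L:=\card(A)^{\card(E\Delta)}$ and $\delta:=1/L\in(0,1)$. The claim is that for each $t\in T_n$ and each admissible assignment of values on $F_n\setminus tE\Delta$, among the at most $L$ completions to a pattern of $\pi_{F_n}(\Sigma)$ at least one exhibits $q^{(t)}$ on the core $tE$: indeed $tE$ is $\Delta$-apart from $F_n\setminus tE\Delta$, so Proposition~\ref{p:strongly-implies-stab} lets us glue the translate $tu_0$ on $tE$ with any realizing configuration on $F_n\setminus tE\Delta$, filling the buffer $tE\Delta\setminus tE$ freely. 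Hence, conditioned on any fixed values off $tE\Delta$, the fraction of patterns of $\pi_{F_n}(\Sigma)$ showing $q^{(t)}$ on $tE$ is at least $\delta$. Enumerating $T_n=\{t_1,\dots,t_m\}$ and revealing cores one at a time, the event ``core $t_j$ avoids $q$'' for $j<i$ depends only on coordinates outside $t_iE\Delta$ (the regions $t_jE\Delta$ being pairwise disjoint), so this per-site estimate applies at every step and the factors multiply:
\[
\card\bigl(\{p\in\pi_{F_n}(\Sigma): p \text{ avoids } q \text{ at every core}\}\bigr)\le (1-\delta)^{m}\,\card(\pi_{F_n}(\Sigma)),\qquad m=\card(T_n).
\]
The delicate points to get right here are the pointwise conditional lower bound (requiring both strong irreducibility and the bounded alphabet) and the measurability bookkeeping that lets the disjointly buffered, $\Delta$-separated sites behave independently.

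Finally I would conclude. By the first paragraph, $\pi_{F_n}(\Psi)\subseteq\pi_{F_n}(\Sigma)$ consists of patterns avoiding $q$ at every core, so the displayed bound yields
\[
\log\card(\pi_{F_n}(\Psi))\le \card(T_n)\log(1-\delta)+\log\card(\pi_{F_n}(\Sigma)).
\]
Dividing by $\card(F_n)$, letting $n\to\infty$, and invoking Proposition~\ref{p:top-entropy-subshift} together with $\card(T_n)\ge c\,\card(F_n)$ gives $h_{top}(\Psi,G)\le c\log(1-\delta)+h_{top}(\Sigma,G)$. Since $c>0$ and $\log(1-\delta)<0$, this is the strict inequality $h_{top}(\Psi,G)<h_{top}(\Sigma,G)$.
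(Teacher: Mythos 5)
Your proof is correct: the per-site conditional bound is valid (for each realized restriction to $F_n\setminus tE\Delta$, the gluing of Proposition~\ref{p:strongly-implies-stab} produces at least one of the at most $\card(A)^{\card(E\Delta)}$ completions showing $q^{(t)}$, and the disjointness of the buffers $t_iE\Delta$ makes the avoidance events at earlier cores depend only on coordinates off $t_iE\Delta$, so the factors $(1-\delta)$ genuinely multiply), and the packing and F\o lner estimates are handled properly. The paper gives no inline proof but defers to \cite[Proposition~4.2]{csc-myhill}, and your argument --- a forbidden pattern, a $\Delta$-separated packing of a F\o lner set, strong-irreducibility gluing, and an exponential counting loss --- is a correct self-contained proof in essentially the same spirit as that cited result.
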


\begin{proof}
See \cite[Proposition~4.2]{csc-myhill}.
\end{proof}

\section{Proof of the main result}
\label{sec:proof-main-result}

In this section, we present the proof of Theorem~\ref{t:myhill-hyp}.
Recall that we are given a countable amenable group $G$ acting continuously on a compact metrizable space $X$, a finite set $A$, a strongly irreducible subshift 
$\Sigma \subset A^G$,  
a uniformly bounded-to-one factor map $\theta \colon \Sigma \to X$,
and a $G$-equivariant continuous map $\tau \colon X \to X$.
We want to show that if $\tau$ is pre-injective with respect to the action of $G$ on $X$ then $\tau$ is surjective.
We shall proceed by contradiction.
So assume that $\tau$ is not surjective.
Then $Y := \tau(X)$ 
is a closed $G$-invariant proper subset of $X$.
As $\theta$ is a factor map,
it follows that   $\Psi := \theta^{-1}(Y)$  is a proper subshift of $\Sigma$.
Since $\Sigma$ is strongly irreducible, we deduce from 
Proposition~\ref{p:entropy-increasing} that
\begin{equation}
\label{e:ent-Psi-less-Sigma}
h_{top}(\Psi,G) < h_{top}(\Sigma,G).
\end{equation}
Let us choose a metric $d$ on $X$ that is compatible with the topology and 
let $\delta > 0$ be an expansiveness constant for $(X,G,d)$.
As $\Sigma$ is compact, the composite map $\tau \circ \theta \colon \Sigma \to X$ is uniformly continuous. 
Consequently, there is a finite subset $L \subset G$  such that
\begin{equation}
\label{e:tau-pi-unif-cont}
v\vert_L = w\vert_L  \Rightarrow d(\tau(\theta(v)),\tau(\theta(w))) < \delta
\end{equation}
for all  $v,w \in \Sigma$.
\par
Let $\FF = (F_n)_{n \in \N}$ be a symmetric F\o lner sequence for $G$
(as already mentioned in the proof of Proposition~\ref{p:top-entropy-subshift}, the existence of such a F\o lner sequence follows from \cite[Corollary~5.3]{namioka}).
Let $ \Delta$ be a finite subset of $G$ such that $\Sigma$ is $\Delta$-irreducible.
Up to replacing $\Delta$ by $\Delta \cup \Delta^{-1}$, 
we can assume that $\Delta$ is symmetric.
\par
Now let us fix some configuration $u \in \Sigma$ and consider, for each  $n \in \N$, 
the subset  $\Phi(n) \subset \Sigma$
consisting of all configurations in $\Sigma$ that coincide with $u$ outside of $F_n \Delta$, that is, 
  \[
\Phi(n) := \{v \in \Sigma: v\vert_{G \setminus F_n \Delta} = u\vert_{G \setminus F_n \Delta} \} \subset \Sigma.
\]

The set  $\Omega_1:= F_n$ is $\Delta$-apart from the set  $\Omega_2 := G \setminus F_n\Delta$. 
Therefore,
it follows from Proposition~\ref{p:strongly-implies-stab} that,
given any configuration $w \in \Sigma$, there exists 
a configuration $v \in \Phi(n)$ such that
$v$ coincides with $w$ on $F_n$.
This implies  that
\begin{equation}
\label{e:card-Phi-n-big}
\card(\Phi(n)) \geq \card(\pi_{F_n}(\Sigma))
\end{equation}
for all $n \in \N$.
\par
As $\theta$ is uniformly bounded-to-one, there is an integer $K \geq 1$ such that every point in $X$ has at most $K$ pre-images under $\theta$.
By using~\eqref{e:card-Phi-n-big}, 
this gives us
\begin{equation}
\label{e:pi-Phi-n-big}
\card(\theta(\Phi(n))) \geq K^{-1}\card(\Phi(n)) \geq K^{-1} \card(\pi_{F_n}(\Sigma))
\end{equation}
for all $n \in \N$.
\par
If $g \in G \setminus L \Delta F_{n} $, 
then 
$$
g^{-1} \in G \setminus F_n^{-1} \Delta^{-1} L^{-1} = G \setminus F_n \Delta L^{-1}
$$
 and hence
$g^{-1}L \subset G \setminus F_n\Delta$.
Thus, if $v, w \in \Phi(n)$
and $g \in G \setminus L \Delta F_{n} $,
then $(g v)\vert_L = (g u)\vert_L = (g w)\vert_L$.
By applying~\eqref{e:tau-pi-unif-cont},
we deduce that all $v,w \in \Phi(n)$ satisfy
\begin{equation}
\label{e:d-tau-teta-gv-gw}
d(\tau(\theta(gv)),\tau(\theta(gw))) < \delta
\quad
\text{for all  } g \in G \setminus L \Delta F_{n.}
\end{equation}

Let now $x,y \in \theta(\Phi(n))$ and
choose $v, w \in \Phi(n)$ such that $x = \theta(v)$ and $y = \theta(w)$.
We have that
\begin{align*}
d(g\tau(x),g\tau(y)) 
&= d(g \tau(\theta(v)), g \tau(\theta(w))) \\
&= d(\tau(g \theta(v)),\tau(g \theta(w))) 
&& \text{(since $\tau$ is  $G$-equivariant)} \\
&= d(\tau(\theta(gv)),\tau(\theta(gw))) 
&&\text{(since $\theta$ is $G$-equivariant).}
\end{align*}
Therefore, by using~\eqref{e:d-tau-teta-gv-gw}, we get
\begin{equation}
\label{e:d-f-k-tau-less-1}
d(g\tau(x),g\tau(y)) < \delta 
\quad
\text{for all  } g \in G \setminus L \Delta F_{n.}
\end{equation}
\par
We now observe that
\begin{align*}
\label{e:ent-f-L-n-M}
 h_{top}(\Sigma,G) 
&= \lim_{n \to \infty} \frac{\log \card(\pi_{F_n^{-1}}(\Sigma))}{\card(F_n)} 
&& \text{(by Proposition~\ref{p:top-entropy-subshift})} \\
&= \lim_{n \to \infty} \frac{\log \card(\pi_{F_n}(\Sigma))}{\card(F_n)} 
&& \text{(since $\FF$ is symmetric)} \\ 
&= \lim_{n \to \infty} \frac{\log (K^{-1}\card(\pi_{F_n}(\Sigma))}{\card (F_n) }.
\end{align*}
On the other hand,
the sequence $\FF' = (L \Delta F_n)_{n \in \N}$ is a F\o lner sequence for $G$ 
by Proposition~\ref{p:folner-times-finite-set}.(ii).
Therefore, 
it follows from~\eqref{e:def-top-entropy} that
\begin{align*}
h_{top}(\Psi,G) 
&\geq h_{top}(Y,G) 
&& \text{(by Proposition~\ref{p:entropy-factor})} \\
&\geq h(\FF',\delta/2,Y,G,d) 
&&\text{(by Proposition \ref{p:top-entropy-metric-int})} \\ 
&= \limsup_{n \to \infty} \frac{\log S(L \Delta F_n,\delta/2,Y,G,d)}{\card(L \Delta F_n)} 
&& \text{(by \eqref{e:h-S-folner})} \\
& = \limsup_{n \to \infty} \frac{\log S(L \Delta F_n,\delta/2,Y,G,d)}{\card(F_n^{-1} \Delta^{-1} L^{-1})}
&& \text{(since $\card(L \Delta F_n) = \card(F_n^{-1} \Delta^{-1} L^{-1})$)} \\
& = \limsup_{n \to \infty} \frac{\log S(L \Delta F_n,\delta/2,Y,G,d)}{\card(F_n \Delta^{-1} L^{-1})}
&& \text{(since $F_n^{-1} = F_n$)} \\ 
& = \limsup_{n \to \infty} \frac{\log S(L \Delta F_n,\delta/2,Y,G,d)}{\card(F_n)}
&& \text{(by using~\eqref{p:folner-times-finite-set-iv}).}  
\end{align*}
As $h_{top}(\Sigma,G) > h_{top}(\Psi,G)$ by~\eqref{e:ent-Psi-less-Sigma},
we deduce from the two estimations above  that there exists  $n \in \N$ such that
\begin{equation}
\label{e:final-choice-n}
K^{-1} \card(\pi_{F_n}(\Sigma)) > S(L \Delta F_n,\delta/2,Y,G,d).
\end{equation}
Let $Z \subset Y$ be an $(L \Delta F_n,\delta/2,Y,G,d)$-spanning subset with minimal cardinality.
It follows from~\eqref{e:pi-Phi-n-big}  and~\eqref{e:final-choice-n} that
$$
\card(\theta(\Phi(n))) > S( L \Delta F_n, \delta/2,Y,G,d) = \card(Z).
$$
Therefore, by the pigeon-hole principle, there exist two distinct points $x,y \in \theta(\Phi(n))$ and a point $z \in Z$ such that
$$
d_{L \Delta F_{n}}(\tau(x),z)) < \delta/2  \quad \text{and} \quad d_{L \Delta F_{n}}(\tau(y),z)) < \delta/2.
$$
By applying the triangle inequality, this gives us
$$
d_{L \Delta F_{n}}(\tau(x),\tau(y))) < \delta,
$$
that is,
\begin{equation}
\label{e:d-f-k-tau-less-2}
d(g\tau(x),g\tau(y)) < \delta
\quad
\text{for all  } g \in L \Delta F_{n}.
\end{equation}
Since $\delta$ is an expansiveness constant for $(X,G,d)$, we deduce from
\eqref{e:d-f-k-tau-less-1} and \eqref{e:d-f-k-tau-less-2} that $\tau(x) = \tau(y)$.
The elements of $\Phi(n)$ are almost equal. Therefore they belong to the same homoclinicity class with respect to the $G$-shift  by Proposition~\ref{p:homoclinic-configurations}.
As $\theta$ is a factor map, it follows from Proposition~\ref{p:homoclinic-factor} that all points in $\theta(\Phi(n))$ are in the same homoclinicity class with respect to the action of 
$G$ on $X$.
Consequently, the points $x$ and $y$ are $G$-homoclinic.
As $x$ and $y$ are distinct 
and have the same image under $\tau$, this shows that $\tau$ is not 
pre-injective. 
This completes the proof of Theorem~\ref{t:myhill-hyp}.

\section{Existence of non-trivial homoclinicity classes}
\label{sec:existence-homoclinic-pairs}

If all $G$-homoclinicity classes of a dynamical system $(X,G)$ are trivial, i.e., each reduced to a single point, then, by definition,
every continuous $G$-equivariant map $\tau \colon X \to X$ is pre-injective.
This implies in particular that $(X,g)$ has the Moore property.
In this section, we present some results about the triviality or non-triviality of homoclinicity classes in certain subshifts and factors of subshifts. 

\begin{proposition}
\label{p:homoclinicity-class-strongly-irred}
Let $G$ be a countable  group and let $A$ be a finite set.
Suppose that $\Sigma \subset A^G$
is an infinite strongly irreducible subshift. 
Then every $G$-homoclinicity  class in  $\Sigma$ is infinite. 
\end{proposition}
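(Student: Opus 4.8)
The plan is to reduce the statement to a counting argument fed by a single gluing step. By Proposition~\ref{p:homoclinic-configurations}, two configurations of $\Sigma$ are $G$-homoclinic if and only if they are almost equal, so it suffices to show that for every $u \in \Sigma$ the set of $v \in \Sigma$ agreeing with $u$ outside some finite subset of $G$ is infinite. To produce such configurations in abundance I will transplant the rich local behaviour of $\Sigma$ into a finite window while leaving $u$ unchanged far away, which is exactly what strong irreducibility allows.

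The first step is to record that an infinite subshift has unbounded pattern complexity, i.e. $\sup_E \card(\pi_E(\Sigma)) = \infty$, where $E$ ranges over the finite subsets of $G$. I would argue by contradiction using an increasing sequence $(R_n)_{n \in \N}$ of finite subsets of $G$ with $\bigcup_n R_n = G$: the restriction maps realize $\Sigma$ as the inverse limit of the finite sets $\pi_{R_n}(\Sigma)$ (injectivity because the $R_n$ exhaust $G$, surjectivity because $\Sigma$ is closed), and the cardinalities $\card(\pi_{R_n}(\Sigma))$ are non-decreasing in $n$. Were they bounded, they would be eventually constant, forcing the bonding surjections $\pi_{R_{n+1}}(\Sigma) \twoheadrightarrow \pi_{R_n}(\Sigma)$ to be bijective for large $n$ and hence $\Sigma$ to be finite, contrary to hypothesis. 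Thus for every integer $N$ there is a finite subset $E \subset G$ with $\card(\pi_E(\Sigma)) > N$.

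The second step is the gluing. Fix $u \in \Sigma$ and a finite subset $\Delta \subset G$ witnessing the $\Delta$-irreducibility of $\Sigma$. Given $N$, choose $E$ as above and set $\Omega_1 := E$ and $\Omega_2 := G \setminus E\Delta$. Then $\Omega_1 \Delta \cap \Omega_2 = E\Delta \cap (G \setminus E\Delta) = \varnothing$, so $\Omega_1$ is $\Delta$-apart from $\Omega_2$. For each $w \in \Sigma$, Proposition~\ref{p:strongly-implies-stab} (in its form allowing the infinite set $\Omega_2$) yields a configuration $v = v_w \in \Sigma$ with $v\vert_E = w\vert_E$ and $v\vert_{G \setminus E\Delta} = u\vert_{G \setminus E\Delta}$. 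Since $v_w$ coincides with $u$ off the finite set $E\Delta$, it is almost equal to $u$, hence lies in the $G$-homoclinicity class of $u$.

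It then remains to count. If $w, w' \in \Sigma$ satisfy $w\vert_E \neq w'\vert_E$, then $v_w\vert_E = w\vert_E \neq w'\vert_E = v_{w'}\vert_E$, so $v_w \neq v_{w'}$; thus $w \mapsto v_w$ descends to an injection of $\pi_E(\Sigma)$ into the homoclinicity class of $u$, which therefore contains more than $N$ points. As $N$ is arbitrary, this class is infinite, and since $u$ was arbitrary the proposition follows. The only genuinely non-formal point is the first step, the passage from infiniteness of $\Sigma$ to unbounded complexity; once that is in hand, everything is an immediate consequence of Proposition~\ref{p:strongly-implies-stab}, with strong irreducibility carrying the entire weight of the argument.
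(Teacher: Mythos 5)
Your proof is correct and takes essentially the same route as the paper: the same gluing, via Proposition~\ref{p:strongly-implies-stab}, of an arbitrary pattern on a finite set $E$ (which is $\Delta$-apart from $G \setminus E\Delta$) with $u$ outside the finite set $E\Delta$, followed by the same counting bound $\card(\Phi) \geq \card(\pi_E(\Sigma))$. The only difference is that you make explicit, by a correct inverse-limit argument, the fact that an infinite subshift has unbounded pattern complexity, which the paper asserts without proof.
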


\begin{proof}
Fix a  configuration $u \in \Sigma$ and let $\Phi \subset \Sigma$
denote the $G$-homoclinicity class of $u$. 
By Proposition~\ref{p:homoclinic-configurations},
the class $\Phi$ consists of all configurations 
$v \in \Sigma$ that are almost equal to $u$.
\par
Let $\Delta$ be a finite subset of $G$ such that $\Sigma$ is $\Delta$-irreducible.
For every finite subset $F \subset G$,
the set  $F$ is $\Delta$-apart from the set  $G \setminus F\Delta$. 
Therefore,
it follows from Proposition~\ref{p:strongly-implies-stab} that,
given any configuration $w \in \Sigma$, there exists 
a configuration $v \in \Sigma$ such that
$v$ coincides with $w$ on $F$
and  with $u$ on $G \setminus F \Delta$.
Observe that such a configuration $v$ is in $\Phi$ since the set $F \Delta$ is finite.
This implies that
\begin{equation}
\label{e:card-Ph-big}
\card(\Phi) \geq \card(\pi_{F}(\Sigma))
\end{equation}
for every finite subset $F \subset G$ (cf. the proof of Theorem~\ref{t:myhill-hyp} in Section~\ref{sec:proof-main-result}).
\par
On the other hand, as the subshift $\Sigma$ is infinite, for every $n \in \N$, there exists a finite subset $F \subset G$ such that
$\card(\pi_{F}(\Sigma)) \geq n$.   
We then deduce from~\eqref{e:card-Ph-big} that  the set $\Phi$ is infinite.
\end{proof}

The corollary below applies in particular to any non-trivial dynamical system $(X,G)$  satisfying the hypotheses of Theorem~\ref{t:myhill-hyp}.
Note that it  requires neither expansiveness of the system nor amenability of the acting group. 

\begin{corollary}
\label{c:homoclinic-class-infinite}
Let $X$ be an infinite   compact metrizable space equipped  with a continuous action of a countable  group $G$. 
Suppose that there exist a finite set $A$, a strongly irreducible subshift 
$\Sigma \subset A^G$, and a finite-to-one factor map 
$\theta \colon \Sigma \to X$.
Then every $G$-homoclinicity  class in $X$ is infinite. 
\end{corollary}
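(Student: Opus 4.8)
The plan is to transfer the corresponding statement for strongly irreducible subshifts, namely Proposition~\ref{p:homoclinicity-class-strongly-irred}, across the factor map $\theta$. The two facts that make this work are that a finite-to-one surjection cannot send an infinite set to a finite one, and that factor maps send homoclinic pairs to homoclinic pairs (Proposition~\ref{p:homoclinic-factor}).

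First I would observe that $\Sigma$ is infinite. Indeed, $\theta \colon \Sigma \to X$ is surjective, so if $\Sigma$ were finite then $X = \theta(\Sigma)$ would be finite as well, contradicting the hypothesis that $X$ is infinite. Since $\Sigma$ is then an infinite strongly irreducible subshift, Proposition~\ref{p:homoclinicity-class-strongly-irred} applies and tells us that every $G$-homoclinicity class in $\Sigma$ is infinite.

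Now fix an arbitrary point $x \in X$; the goal is to show that its $G$-homoclinicity class in $X$ is infinite. Using surjectivity of $\theta$, choose $\widetilde{x} \in \Sigma$ with $\theta(\widetilde{x}) = x$, and let $\Phi \subset \Sigma$ be the $G$-homoclinicity class of $\widetilde{x}$; by the previous paragraph $\Phi$ is infinite. By Proposition~\ref{p:homoclinic-factor}, for every $\widetilde{y} \in \Phi$ the points $\theta(\widetilde{y})$ and $\theta(\widetilde{x}) = x$ are $G$-homoclinic, so the image $\theta(\Phi)$ is entirely contained in the $G$-homoclinicity class of $x$ in $X$.

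It then remains to check that $\theta(\Phi)$ is infinite, and this is where the finite-to-one hypothesis enters---indeed the only place it is needed. If $\theta(\Phi)$ were finite, then $\Phi$ would be contained in $\theta^{-1}(\theta(\Phi))$, a finite union of fibers of $\theta$, each of which is finite; hence $\Phi$ would be finite, contradicting its infiniteness. Therefore $\theta(\Phi)$ is infinite, and so is the $G$-homoclinicity class of $x$ that contains it. As $x$ was arbitrary, every $G$-homoclinicity class in $X$ is infinite. I do not expect any serious obstacle here: the substantive content is already packaged in Propositions~\ref{p:homoclinicity-class-strongly-irred} and~\ref{p:homoclinic-factor}, and what remains is simply to assemble them, the only point worth spelling out being why the finite-to-one image of an infinite set is again infinite.
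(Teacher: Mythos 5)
Your proof is correct and follows essentially the same route as the paper: apply Proposition~\ref{p:homoclinicity-class-strongly-irred} to $\Sigma$, push homoclinicity classes forward via Proposition~\ref{p:homoclinic-factor}, and use finite-to-oneness to see the image of an infinite class stays infinite. The paper's version is terser and leaves implicit both the observation that $\Sigma$ is infinite (forced by surjectivity of $\theta$) and the fiber-counting step, which you rightly spell out.
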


\begin{proof}
Every $G$-homoclinicity class in $\Sigma$ is infinite
by Proposition~\ref{p:homoclinicity-class-strongly-irred}.
As the factor map $\theta$ is finite-to-one,
we conclude that each $G$-homoclinicity class in $X$ is infinite 
by applying Proposition~\ref{p:homoclinic-factor}. 
\end{proof}

In \cite[Proposition 2.1]{schmidt-pacific}, Schmidt proved that if $\Sigma$ is a subshift of finite type 
over $\Z^d$ with positive topological entropy, 
then the $\Z^d$-homoclinicity relation on $\Sigma$ is non-trivial, i.e., ther exist two distinct configurations in $\Sigma$ that are $\Z^d$-homoclinic.
The following statement extends Schmidt's result in two directions.
Firstly, it applies to a subshift of finite type with positive topological entropy $\Sigma$ over an arbitrary countable amenable group $G$.
Secondly, it says that, for any integer $n \geq 2$, one can find $n$ distinct configurations in $\Sigma$ that are $G$-homoclinic.  

\begin{proposition}
\label{p:homoclinic-pair-sft}
Let $G$ be a countable amenable group and let $A$ be a finite set.
Suppose that $\Sigma \subset A^G$
is a subshift of finite type with $h_{top}(\Sigma,G) > 0$.
Then, for every integer $K \geq 1$, there exists a $G$-homoclinicity class in $\Sigma$
containing more than $K$ configurations.
\end{proposition}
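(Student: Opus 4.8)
The plan is to play the two hypotheses against each other at different scales. Positive entropy forces exponentially many local patterns on a large F\o lner set $F_n$, while the finite-type constraint is governed only by a \emph{boundary layer} of $F_n$, which the F\o lner condition forces to be of subexponential size. A pigeonhole argument between these two scales then yields more than $K$ patterns that can all be glued onto one fixed configuration so as to agree off the finite set $F_n$. By Proposition~\ref{p:homoclinic-configurations} it suffices to produce more than $K$ pairwise almost-equal configurations in $\Sigma$, since almost equality is the homoclinicity relation and these configurations then all lie in a single $G$-homoclinicity class.

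First I would fix a memory set: by~\eqref{e:def-sft} there are a finite $\Omega \subset G$ and $\PP \subset A^\Omega$ with $\Sigma = \{u \in A^G : (g u)\vert_\Omega \in \PP \text{ for all } g \in G\}$, and enlarging $\Omega$ I may assume $1_G \in \Omega = \Omega^{-1}$. The decisive structural fact is the gluing property of subshifts of finite type. Given $u, w \in \Sigma$ and a finite $E \subset G$, let $B \subset E$ be the set of sites of $E$ lying in some window $h\Omega$ that is not contained in $E$; one checks $B \subset \{g \in E : g(\Omega^2)^{-1} \not\subset E\}$. If $u$ and $w$ coincide on $B$, then the configuration equal to $w$ on $E$ and to $u$ on $G \setminus E$ lies in $\Sigma$: every window $h\Omega$ meeting both $E$ and its complement carries identical values in $u$, $w$, and the glued configuration, so the local constraint $(h\,\cdot)\vert_\Omega \in \PP$ is inherited, while windows entirely inside or entirely outside $E$ are licensed by $w \in \Sigma$, resp.\ $u \in \Sigma$. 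The key estimate is that $B \subset \bigcup_{s \in (\Omega^2)^{-1}}(E \setminus E s^{-1})$, so along a F\o lner sequence $\card(B) = o(\card(E))$ by Proposition~\ref{p:folner-times-finite-set} (and~\eqref{e:folner-net}).

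Next I would fix a symmetric F\o lner sequence $\FF = (F_n)_{n \in \N}$ and set $h := h_{top}(\Sigma,G) > 0$. By Proposition~\ref{p:top-entropy-subshift} and symmetry of $\FF$,
$$
\lim_{n \to \infty} \frac{\log \card(\pi_{F_n}(\Sigma))}{\card(F_n)} = h > 0,
$$
so $\card(\pi_{F_n}(\Sigma)) \geq e^{(h/2)\card(F_n)}$ for all large $n$. Let $B_n \subset F_n$ be the boundary layer above; then $\card(B_n) = o(\card(F_n))$, so the number of patterns that configurations of $\Sigma$ can display on $B_n$ is at most $\card(A)^{\card(B_n)} = e^{o(\card(F_n))}$. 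Grouping the patterns in $\pi_{F_n}(\Sigma)$ by their restriction to $B_n$ and applying the pigeonhole principle, some restriction class contains at least
$$
\frac{\card(\pi_{F_n}(\Sigma))}{\card(A)^{\card(B_n)}} \geq e^{(h/2 - o(1))\card(F_n)}
$$
patterns, a quantity tending to infinity with $n$.

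Finally I would choose $n$ so large that this class has more than $K$ elements, and pick $w_1, \dots, w_m \in \Sigma$ with $m > K$ whose restrictions to $F_n$ are these pairwise distinct patterns, all coinciding on $B_n$. For each $i$ let $v_i$ agree with $w_i$ on $F_n$ and with $w_1$ off $F_n$. Since $w_i$ and $w_1$ coincide on $B_n$, the gluing property gives $v_i \in \Sigma$; the $v_i$ agree pairwise off the finite set $F_n$, hence are almost equal and so lie in one $G$-homoclinicity class by Proposition~\ref{p:homoclinic-configurations}, while their restrictions to $F_n$ stay pairwise distinct. Thus the homoclinicity class of $w_1 = v_1$ contains more than $K$ configurations. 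The main obstacle is the bookkeeping of the boundary layer: it must be defined so that agreement there genuinely licenses the gluing (every straddling window being covered), yet kept thin enough that the F\o lner estimate of Proposition~\ref{p:folner-times-finite-set} still beats the exponential lower bound supplied by positive entropy.
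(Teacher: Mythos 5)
Your proposal is correct and is in substance the paper's own argument run in the contrapositive direction: the paper assumes every $G$-homoclinicity class has at most $K$ elements, proves the same gluing-across-a-thin-boundary lemma for subshifts of finite type (using the outer boundary $\partial F = F\Omega^{2}\setminus F$ where you use an inner boundary layer of $F_n$), deduces $\card(\pi_{F}(\Sigma)) \leq K\,\card(\pi_{\partial F}(\Sigma))$, and concludes $h_{top}(\Sigma,G)=0$ via Proposition~\ref{p:folner-times-finite-set}. Your pigeonhole between the exponential lower bound $\card(\pi_{F_n}(\Sigma))\geq e^{(h/2)\card(F_n)}$ and the subexponential count $\card(A)^{\card(B_n)}$ of boundary patterns is exactly that counting inequality read the other way, so the two proofs coincide in approach and key estimates.
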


\begin{proof}
We shall proceed by contradiction.
So let us assume that there is an integer $K \geq 1$
such that each $G$-homoclinicity class in $\Sigma$
contains at most $K$ configurations.
\par
As the subshift $\Sigma$ is of finite type, we can find a finite subset $\Omega \subset G$ and  $\PP \subset A^\Omega$ such that $\Sigma$ satisfies~\eqref{e:def-sft}.
Up to enlarging $\Omega$ if necessary, 
we can assume that $1_G \in \Omega$ and $\Omega = \Omega^{-1}$.
\par
Let $F \subset G$ be a finite subset.
Observe that
\begin{equation}
\label{e:inclusions-F-Omeg}
F \subset F \Omega \subset F \Omega^{2}
\end{equation}
since $1_G \in \Omega$.
Consider the finite subset $\partial F \subset G$ defined by
$$
\partial F := F \Omega^2 \setminus F
$$
and suppose that $u$ and $v$ are two configurations in $\Sigma$ such that
\begin{equation}
\label{e:u-v-coincide-boundary}
u\vert_{\partial F} = v\vert_{\partial F}.
\end{equation}
We claim that  the configuration $w \in A^G$ defined by
\begin{equation*}
w(g) = \begin{cases}
v(g) & \text{ if } g \in F\Omega\\
u(g) &  \text{ if } g \in G \setminus F\Omega
\end{cases}
\end{equation*}
also belongs to $\Sigma$.
\par
To see this, we first observe that it follows from~\eqref{e:inclusions-F-Omeg} 
and~\eqref{e:u-v-coincide-boundary}
that $w$ coincides with $u$ on $G \setminus F$ and with $v$ on $F \Omega^{2}$.
Now let $g \in G$.
\par
If $g \in F \Omega$, then $g \Omega \subset F \Omega^2$ so that
$$
(g^{-1}w)\vert_\Omega = (g^{-1}v)\vert_\Omega \in \PP
$$
since $v \in \Sigma$.
\par
On the other hand, if $g \in G \setminus F \Omega$, 
then $g \Omega \subset G \setminus F$ so that
$$
(g^{-1}w)\vert_\Omega = (g^{-1}u)\vert_\Omega \in \PP
$$
since $u \in \Sigma$.
\par
 Thus, we have that $(g^{-1}w)\vert_\Omega \in \PP$ for all $g \in G$.
This shows that $w \in \Sigma$.
\par
The configurations $u$ and $w$ are $G$-homoclinic since they coincide outside of $F$.
On the other hand, $w$ coincides with $v$ on $F$. 
As the $G$-homoclinicity class of $u$ contains at most $K$ configurations by our hypothesis,
we deduce that
\begin{equation}
\label{e:estimate}
\card(\pi_F(\Sigma)) \leq K \card(\pi_{\partial F}(\Sigma)) 
\end{equation}
for every finite subset $F \subset G$.
\par
Now let $(F_n)_{n \in \N}$ be a symmetric F\o lner sequence for $G$.
We then have
\begin{equation*}
h_{top}(\Sigma,G) = \lim_{n \to \infty} \frac{\log \card(\pi_{F_n}(\Sigma))}{\card(F_n)}
\end{equation*}
by Proposition~\ref{p:top-entropy-subshift}.
Since
\begin{align*}
\frac{\log \card(\pi_{F_n}(\Sigma))}{\card(F_n)} 
&\leq
\frac{\log (K\card(\pi_{\partial F_n}(\Sigma)))}{\card(F_n)}
&& \text{(by~\eqref{e:estimate} )} \\ 
&=
\frac{\log K}{\card(F_n)} + 
\frac{\log \card(\pi_{\partial F_n}(\Sigma))}{\card(F_n)} \\
&\leq
\frac{\log K}{\card(F_n)} + 
\frac{\log \card(A^{\partial F_n})}{\card(F_n)} \\
&=
\frac{\log K}{\card(F_n)} + 
\frac{\card(\partial F_n)}{\card(F_n)} \log \card(A) 
\end{align*}
 and
$$
\lim_{n \to \infty} \frac{\card(\partial F_n)}{\card(F_n)}
= \lim_{n \to \infty} \frac{\card(F_n \Omega^2 \setminus F_n)}{\card(F_n)} = 0
$$
by applying Proposition~\ref{p:folner-times-finite-set}.(iii),
we deduce that $h_{top}(\sigma,G) = 0$.
\end{proof}

The following example shows that a subshift $\Sigma \subset A^G$ that satisfies the hypotheses of Proposition~\ref{p:homoclinic-pair-sft}
may contain a trivial $G$-homoclinicity class.

\begin{example}
Let $G$ be a finitely generated amenable group (e.g. $G = \Z^d$)  and let $A$ be a finite set with more than two elements.
Fix an element $a_0 \in A$ and consider the subshift $\Sigma \subset A^G$
consisting of all configurations $u \in A^G$ such that either $u(g) = a_0$ for all $g \in G$
or $u(g) \not= a_0$ for all $g \in G$.
This is a subshift of finite type.
Indeed, if $S$ is a finite generating subset of $G$, then $\Sigma$ 
satisfies~\eqref{e:def-sft}
for $\Omega = \{1_G\} \cup S$
by taking as $\PP$ the set consisting of all maps 
$p \colon \Omega \to A$ such that either $p(g) = a_0$ for all $g \in \Omega$ or
$p(g) \not= a_0$ for all $g \in G$.
We clearly have $h_{top}(\Sigma,G) = \log(\card(A) - 1) > 0$.
On the other hand, the $G$-homoclinicity class of the constant configuration 
$u_0 \in \Sigma$, given by $u_0(g) = a_0$ for all $g \in G$,
is reduced to $u_0$. 
 \end{example}

As an immediate consequence of 
Proposition~\ref{p:homoclinic-pair-sft}, we get the following result.
Note that the expansiveness of the dynamical system $(X,G)$ is not required in the hypotheses.

\begin{corollary}
\label{c:X-G-factor-sft}
Let $X$ be an infinite   compact metrizable space equipped  with a continuous action of a countable amenable  group $G$ that satisfies $h_{top}(X,G) > 0$. 
Suppose that there exist a finite set $A$, a subshift of finite type 
$\Sigma \subset A^G$, and a uniformly bounded-to-one factor map 
$\theta \colon \Sigma \to X$.
Then, for every integer $n \geq 1$, there exists a $G$-homoclinicity class in $X$
containing more than $n$ points.
\end{corollary}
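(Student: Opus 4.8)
The plan is to derive Corollary~\ref{c:X-G-factor-sft} from Proposition~\ref{p:homoclinic-pair-sft} together with the entropy inequality for factors and the finite-to-one property of $\theta$. The hypotheses give us a subshift of finite type $\Sigma \subset A^G$ and a uniformly bounded-to-one factor map $\theta \colon \Sigma \to X$. First I would observe that, since $\theta$ is uniformly bounded-to-one, there is an integer $K \geq 1$ with $\card(\theta^{-1}(x)) \leq K$ for all $x \in X$; in particular $\theta$ is finite-to-one, so by Corollary~\ref{c:homoclinic-class-infinite} the hypothesis that $X$ is infinite already guarantees that every $G$-homoclinicity class in $X$ is infinite. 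Thus the conclusion is actually immediate, and the main point is simply to record it.

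Alternatively, to track the quantitative content through $\Sigma$, I would proceed as follows. Since $(X,G)$ is a factor of $(\Sigma,G)$, Proposition~\ref{p:entropy-factor} gives $h_{top}(X,G) \leq h_{top}(\Sigma,G)$, and the assumption $h_{top}(X,G) > 0$ forces $h_{top}(\Sigma,G) > 0$. Now Proposition~\ref{p:homoclinic-pair-sft} applies to $\Sigma$: for every integer $M \geq 1$ there is a $G$-homoclinicity class in $\Sigma$ containing more than $M$ configurations. Fixing an arbitrary target $n \geq 1$, I would take $M := nK$ and select a $G$-homoclinicity class $\Phi \subset \Sigma$ with $\card(\Phi) > nK$.

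The key step is then to push this class forward under $\theta$. By Proposition~\ref{p:homoclinic-factor}, if two configurations in $\Sigma$ are $G$-homoclinic, their images under $\theta$ are $G$-homoclinic in $X$; hence $\theta(\Phi)$ lies entirely within a single $G$-homoclinicity class of $X$. Because each point of $X$ has at most $K$ preimages under $\theta$, we have the cardinality bound
\begin{equation*}
\card(\theta(\Phi)) \geq K^{-1} \card(\Phi) > K^{-1} \cdot nK = n.
\end{equation*}
Therefore $\theta(\Phi)$ is a subset of a single $G$-homoclinicity class of $X$ containing more than $n$ points, which yields the desired conclusion.

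I do not anticipate a serious obstacle here, since all the substantive work has been done in Proposition~\ref{p:homoclinic-pair-sft} and Propositions~\ref{p:homoclinic-factor} and~\ref{p:entropy-factor}; the only subtlety to handle carefully is the bookkeeping between the multiplicity bound $K$ for $\theta$ and the target size $n$, which is why one chooses the class in $\Sigma$ to have more than $nK$ rather than merely more than $n$ elements. The uniform bound (as opposed to mere finiteness) of $\theta$ is exactly what makes the fibre-collapse estimate $\card(\theta(\Phi)) \geq K^{-1}\card(\Phi)$ valid with a single constant $K$ independent of the point, and this is the one hypothesis that must be invoked explicitly.
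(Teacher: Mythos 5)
Your opening paragraph contains a genuine error: Corollary~\ref{c:homoclinic-class-infinite} requires the subshift $\Sigma$ to be \emph{strongly irreducible}, whereas in the present statement $\Sigma$ is only assumed to be of finite type, and subshifts of finite type need not be strongly irreducible. So the claim that ``the conclusion is actually immediate'' from the infiniteness of $X$ is unjustified --- and in fact false as a general principle. The Ledrappier subshift $\Sigma \subset (\Z/2\Z)^{\Z^2}$ (discussed in the paper's final section) is an infinite subshift of finite type in which \emph{every} $G$-homoclinicity class is trivial; taking $X = \Sigma$ and $\theta = \Id_\Sigma$ (uniformly bounded-to-one with $K = 1$) satisfies all your first-paragraph hypotheses, yet no homoclinicity class has more than one point. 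This is consistent with the corollary only because $h_{top}(\Sigma,\Z^2) = 0$: the positive-entropy hypothesis, which your shortcut never invokes, is exactly what rules such examples out. Had you stopped after the first paragraph, the proof would be wrong.

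Fortunately, your ``alternative'' argument is complete and correct, and it is precisely the paper's proof: use Proposition~\ref{p:entropy-factor} to transfer $h_{top}(X,G) > 0$ up to $h_{top}(\Sigma,G) > 0$, apply Proposition~\ref{p:homoclinic-pair-sft} to obtain a $G$-homoclinicity class $\Phi \subset \Sigma$ with $\card(\Phi) > nK$, push it forward via Proposition~\ref{p:homoclinic-factor} so that $\theta(\Phi)$ lies in a single $G$-homoclinicity class of $X$, and conclude with the fibre bound $\card(\theta(\Phi)) \geq K^{-1}\card(\Phi) > n$. Your bookkeeping with the constant $K$, and your remark that the uniform bound (rather than mere finiteness) of $\theta$ is what makes this estimate work, both match the paper's reasoning exactly. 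Simply delete the first paragraph and the proof stands as the paper's own.
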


\begin{proof}
As $\theta$ is uniformly bounded-to-one, there is an integer $K \geq 1$ such that 
$\card(\theta^{-1}(x)) \leq K$ for all $x \in X$.
We have $h_{top}(\Sigma,G) > 0$ by Proposition~\ref{p:entropy-factor}.
Therefore, it follows from Proposition~\ref{p:homoclinic-pair-sft} that,
given an integer $n \geq 1$,
there is a $G$-homoclinicity class  $\Phi \subset \Sigma$ that contains more than 
$K n$ configurations.
Then its  image  $\theta(\Phi) \subset X$ has more than $n$ points and is contained in a $G$-homoclinicity class
since  $\theta$ preserves homoclinicity by Proposition~\ref{p:homoclinic-factor}.
\end{proof}

\begin{remark}
In \cite[Question 1.1]{chung-li-expansive},
Chung and Li asked whether every expansive system $(X,G)$, where $X$ is a compact metrizable space, $G$ is a countable amenable group, and
$h_{top}(x,G) > 0$, must contain two distinct points that are $G$-homoclinic.
This question is known to have an affirmative answer
for algebraic systems when $G = \Z^d$ \cite[Theorem~4.1]{lind-schmidt} and, mor generally,
when $G$ is polycyclic-by-finite
\cite[Theorem~1.1]{chung-li-expansive}.
We recall that the system $(X,G)$ is said to be \emph{algebraic} if $X$ is a compact abelian group and $G$ acts by automorphisms on $X$.
However, according to \cite{chung-li-expansive}, the answer to this question in the general case is unknown even for  $G = \Z$.  
\end{remark}

\begin{remark}
In \cite[Example 3.4]{lind-schmidt}, Lind and Schmidt
constructed examples of expansive algebraic actions of $\Z$ on the $n$-torus $\T^n$ with positive topological entropy for which 
every $\Z$-homoclinicity class is trivial.
\end{remark}


\par
Recall that a group $G$ is \emph{residually finite} provided that for every finite subset $F \subset G$ there exists
a finite index subgroup $H \subset G$ such that $\card(H \cap F) \leq 1$.
All finitely generated abelian groups (e.g.  $\Z^d$, for $d \geq1$) and, more generally, all finitely generated virtually nilpotent groups are residually finite.
\par
The following result yields a weak converse to Proposition~\ref{p:homoclinic-pair-sft} above.
In the particular case $G = \Z^d$, it reduces  to the second statement 
of Proposition~2.1 in \cite{schmidt-pacific}. 

\begin{proposition}
\label{p:reverse}
Let $A$ be a finite set, $G$ a countable amenable group, and $\Sigma \subset A^G$ a subshift of finite type.
Suppose that $G$ is residually finite and that the set  $\Per(\Sigma,G)$ is dense in $\Sigma$.
If $h_{top}(\Sigma,G) = 0$, then 
every $G$-homoclinicity class in $\Sigma$ is reduced to a single configuration.
\end{proposition}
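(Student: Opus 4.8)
The plan is to argue by contradiction and to turn a single nontrivial homoclinic pair into an exponential lower bound for $\card(\pi_{F_n}(\Sigma))$, forcing $h_{top}(\Sigma,G) > 0$ and contradicting the hypothesis. By Proposition~\ref{p:homoclinic-configurations}, a nontrivial homoclinicity class produces two configurations $u,v \in \Sigma$ that are almost equal but distinct; let $D := \{g \in G : u(g) \neq v(g)\}$, a finite nonempty set, and fix $d_0 \in D$. As in the proof of Proposition~\ref{p:homoclinic-pair-sft}, I would first enlarge $\Omega$ so that $1_G \in \Omega = \Omega^{-1}$ and $\Sigma$ satisfies \eqref{e:def-sft}. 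The idea is that the ``defect'' turning $u$ into $v$ on $D$ can be copied into infinitely many well-separated locations of a suitable periodic background, these copies being switchable on and off independently; this is where all three remaining hypotheses (dense periodic points, residual finiteness, amenability) enter.

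First I would use density of $\Per(\Sigma,G)$ to pick a periodic configuration $p \in \Sigma$ with $p\vert_{D\Omega^2} = u\vert_{D\Omega^2}$, and let $H_0$ be a finite-index subgroup of $G$ fixing $p$ (such $H_0$ exists by \eqref{e:periodic-points-X-G}). Writing $E := D\Omega$, residual finiteness applied to the finite set $E\Omega^{-1}\Omega E^{-1}$ yields a finite-index subgroup whose intersection $H$ with $H_0$ is again of finite index and satisfies $H \cap E\Omega^{-1}\Omega E^{-1} = \{1_G\}$. This separation condition guarantees that for $h \neq h'$ in $H$ no single window $g\Omega$ can meet both $hE$ and $h'E$, so that the translated defect regions $\{hE : h \in H\}$ are pairwise independent with respect to the defining constraint of $\Sigma$.

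Next I would produce the configurations to be counted. The gluing construction in the proof of Proposition~\ref{p:homoclinic-pair-sft}, applied to the pair $(p,v)$ --- which agree on $\partial D = D\Omega^2 \setminus D$ since $p = u = v$ there --- shows that the configuration $q$ equal to $v$ on $E = D\Omega$ and to $p$ elsewhere lies in $\Sigma$. Because $H$ fixes $p$, each translate $hq$ ($h \in H$) again lies in $\Sigma$ and equals $hv$ on $hE$ and $p$ off $hE$. For a finite subset $S \subseteq H$ I would then set $q_S := hv$ on $hE$ for each $h \in S$ and $q_S := p$ off $\bigcup_{h \in S} hE$ (the pieces are disjoint by the separation property). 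Every window $g\Omega$ either meets none of the $hE$, where $q_S$ agrees with $p$, or meets exactly one $hE$, where $q_S$ agrees with $hq$; in both cases the local pattern is admissible, so $q_S \in \Sigma$. Moreover $q_S(hd_0) = v(d_0)$ when $h \in S$ and $q_S(hd_0) = p(d_0) = u(d_0)$ when $h \in H \setminus S$, so the map $S \mapsto q_S$ is injective, and distinct subsets still differ inside $F_n$ as soon as they differ on $\{h \in H : hd_0 \in F_n\}$.

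Finally I would read off the entropy. Setting $H_n := \{h \in H : hd_0 \in F_n\}$, the previous step gives $\card(\pi_{F_n}(\Sigma)) \geq 2^{\card(H_n)}$, while $\card(H_n) = \card(F_n \cap Hd_0)$. The remaining point is that the right coset $Hd_0$ has density $1/[G:H]$ along any F\o lner sequence; this follows from the F\o lner property (cf.\ Proposition~\ref{p:folner-times-finite-set}), since right translation by coset representatives carries the intersection with one coset onto that with another up to an error negligible relative to $\card(F_n)$. Combining with the entropy formula of Proposition~\ref{p:top-entropy-subshift} then yields $h_{top}(\Sigma,G) \geq (\log 2)/[G:H] > 0$, the desired contradiction. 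I expect the main obstacle to be verifying that the simultaneous insertions keep $q_S$ inside $\Sigma$, i.e.\ making the independence of the defect copies precise --- which is exactly what the combination of residual finiteness (to separate the sites) and periodicity of $p$ (to legitimize each individual insertion via $G$-invariance) is designed to overcome; the coset-density computation is the only other nonroutine ingredient.
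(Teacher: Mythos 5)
Your proposal is correct and follows essentially the same route as the paper's proof: a periodic configuration matching the homoclinic pair on a neighborhood of the defect, residual finiteness to separate the translated defect sites, independent insertions giving $\card(\pi_{F_n}(\Sigma)) \geq 2^{\card(H_n)}$, and a F\o lner count forcing $h_{top}(\Sigma,G) > 0$. The only deviations are cosmetic: you realize the multi-site configurations as locally agreeing with translates $hq$ of a single glued configuration instead of defining them directly as the paper's $w_z$, and you replace the paper's appeal to the tiling estimate of \cite[Proposition~5.6.4]{csc-book} by an elementary coset-density computation (which even yields the explicit bound $h_{top}(\Sigma,G) \geq (\log 2)/[G:H]$); just take the F\o lner sequence symmetric, as the paper does, so that Proposition~\ref{p:top-entropy-subshift} applies with $\pi_{F_n}$ in place of $\pi_{F_n^{-1}}$.
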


\begin{proof}
We shall proceed by contradiction. 
So, let us suppose that $u_0$ and $u_1$ are two distinct
$G$-homoclinic configurations in $\Sigma$. 
Then we can find a finite subset $E \subset G$ such that
$u_0 \vert_{G \setminus E} = u_1 \vert_{G \setminus E}$.
Since $\Sigma$ is of finite type, we can find,
as in the proof of Proposition~\ref{p:homoclinic-pair-sft}, 
a finite symmetric subset $\Omega \subset G$, with
$1_G \in \Omega$, and $\PP \subset A^\Omega$ such that $\Sigma$ 
satisfies~\eqref{e:def-sft}.
\par
Since $\Per(\Sigma,G)$ is dense in $\Sigma$, there is a periodic configuration  
$v \in \Sigma$ such that
$v\vert_{E\Omega^2} = u_0\vert_{E\Omega^2}$. 
Let $H_v$ denote the stabilizer of $v$ in $G$ 
and observe that $[G:H_v] = \card(G v) < \infty$.
\par
We claim that we can find a finite index subgroup $H \subset H_v$ such that the left-translates $hE\Omega^2$, 
$h \in H$, are all pairwise disjoint. Indeed, since $G$ is residually finite, there exists  a finite index subgroup $H' \subset G$
such that $H' \cap E \Omega^2 (E \Omega^2)^{-1} = \{1_G\}$. 
Then  $H := H_v \cap H'$ is a finite index subgroup of
$G$.
Moreover, if $h_1,h_2 \in H$ are distinct,
then $h_1 h_2^{-1} \not= 1_G$, so that
$h_1 h_2^{-1} \notin E\Omega^2(E \Omega^2)^{-1}$
and hence $h_1 E \Omega^2$ does not meet $h_2 E \Omega^2$.
Thus, the subgroup $H$ has the required properties.
\par
Let $R \subset G$ be a complete set of representatives of the right cosets of $H$ in $G$.
Then $H$  satisfies $\bigcup_{h \in H} h R= G$.
As the sets $hE\Omega^2$, 
$h \in H$, are pairwise disjoint,
we deduce that $H$ is an $(E\Omega^2,R)$-tiling of $G$ in the sense
of~\cite[Section 5.6]{csc-book}.
\par
Consider now, for each $z \in \{0,1\}^H$, 
the configuration $w_z \in A^G$ defined by
\[
w_z(g) =
\begin{cases}
u_{z(h)}(h^{-1} g) & \text{ if } g \in h E \Omega^2  \text{ for some (necessarily unique)} h \in H \\
v(g) & \text{ otherwise}
\end{cases}
\]
for all $g \in G$.
Observe that,  since
  $u_0 \vert_{E \Omega^2 \setminus E} = u_1 \vert_{E \Omega^2 \setminus E}$, we have that
\begin{equation}
\label{e:u-v}
w_z \vert_{G \setminus \bigcup_{h \in H} h E} = v \vert_{G \setminus \bigcup_{h \in H} h E}.
\end{equation}
We claim that $w_z \in \Sigma$ for every $z \in \{0,1\}^H$. 
Indeed, let $g \in G$. We distinguish two cases.
\par
\emph{First case:} $g \in h E \Omega$ for some $h \in H$.
 Then  
$g\Omega \subset h E \Omega^2$ so that
\[
(g^{-1} w_z)\vert_\Omega = (g^{-1} h u_{z(h)})\vert_\Omega \in \PP,
\]
since $u_{z(h)} \in \Sigma$.
\par
\emph{Second case:}  $g \in G \setminus \bigcup_{h \in H} h E \Omega$.
 Then
$g\Omega  \subset G \setminus \bigcup_{h \in H} h E$ (here we use the fact that $\Omega$ is symmetric), 
so that, by virtue of~\eqref{e:u-v},
\[
(g^{-1}w_z)\vert_\Omega = (g^{-1} v)\vert_\Omega \in \PP
\]
since $v \in \Sigma$.
\par
Thus we have that $(g^{-1} w_z)\vert_\Omega \in \PP$ for all $g \in G$.
This shows that $w_z \in \Sigma$ for all $z \in \{0,1\}^H$.
\par
We are now in position to prove that $h_{top}(\Sigma, G) > 0$.
Let $(F_n)_{n \in \N}$ be a symmetric F\o lner sequence for $G$.
For each $n \in \N$, let $H_n$ be the finite subset of $G$ defined by 
\[
H_n := \{ h \in H: hE \subset F_n\}.
\]
By \cite[Proposition~5.6.4]{csc-book}  applied to the $(E\Omega^2,R)$-tiling $H$,
there exist a constant  $\alpha > 0$ and $n_0 \in \N$ such that
\begin{equation}
\label{e:alpha}
\card(H_n) \geq \alpha \card(F_n)
\end{equation}
for all $n \geq n_0$.
\par
As $u_0\vert_F \neq u_1\vert_F$,
it follows from the definition of $w_z$ and $H_n$ that
\[
\card(\pi_{F_n}(\{w_z : z \in \{0,1\}^H \})) \geq 2^{\card(H_n)}.
\]
Since $w_z \in \Sigma$ for all $z \in \{0,1\}^H$, it follows
from~\eqref{e:alpha} that
\[
\card(\pi_{F_n}(\Sigma)) \geq 2^{\card(H_n)} 
\geq 2^{\alpha \card(F_n)}
\]
for all $n \geq n_0$.
We then conclude that 
$$
h_{top}(\Sigma, G) = \lim_{n \to \infty} \frac{\log(\card(\pi_{F_n}(\Sigma)))}{\card(F_n)}
\geq \alpha \log 2 > 0
$$
by applying Proposition~\ref{p:top-entropy-subshift}.
\end{proof}
\section{Surjunctivity and density of periodic points}
\label{sec:surj-density}

The following surjunctivity result does not require amenability of the acting group
but only expansiveness of the dynamical system and density of periodic points.
It is well known for subshifts
(cf. \cite{gottschalk}, 
\cite[Section 4]{gromov-esav}, \cite[Theorem~8.2]{fiorenzi-periodic},   
\cite[Proposition~2.1]{csc-periodic})
and was previously obtained by the authors in \cite[Proposition~4.1]{csc-goehyp} 
for  $G = \Z$.  

\begin{proposition}
\label{p:ppd-implies-surjunctive}
Let $X$ be a compact metrizable space equipped with a continuous action of a countable group $G$. Suppose that  the dynamical system $(X,G)$ is expansive 
and that the set $\Per(X,G)$ of periodic points of $(X,G)$ is dense in $X$.
Then the dynamical system $(X,G)$ is surjunctive.
\end{proposition}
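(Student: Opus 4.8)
The strategy is to prove that $\tau$ maps the set of periodic points onto itself and then to invoke density. Let $\tau \colon X \to X$ be an injective $G$-equivariant continuous map; since $X$ is compact and $\tau$ is continuous, the image $\tau(X)$ is a closed subset of $X$, so it suffices to show that $\Per(X,G) \subset \tau(X)$. Recalling from~\eqref{e:periodic-points-X-G} that $\Per(X,G) = \bigcup_H \Fix(H)$, with $H$ ranging over the finite-index subgroups of $G$, I would reduce the problem to analysing each $\Fix(H)$ separately.

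The main step, and the place where expansiveness is essential, is the following finiteness lemma: for every finite-index subgroup $H \subset G$, the set $\Fix(H)$ is finite. To prove it, fix a finite set $R$ of representatives for the left cosets of $H$ in $G$, so that $G = \bigcup_{r \in R} r H$. If $x \in \Fix(H)$ and $g = r h$ with $r \in R$ and $h \in H$, then $g x = r x$; hence the orbit of $x$ is the finite set $\{r x : r \in R\}$. Let $\delta$ be an expansiveness constant for $(X,G,d)$ and set $D(x,y) := \max_{r \in R} d(r x, r y)$. For distinct $x, y \in \Fix(H)$, expansiveness provides some $g \in G$ with $d(g x, g y) \geq \delta$, and writing $g = r h$ as above gives $d(r x, r y) \geq \delta$, so that $D(x,y) \geq \delta$. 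Thus any two distinct points of $\Fix(H)$ are $D$-separated by at least $\delta$. Since $R$ is finite and the action is continuous, $D$ is a continuous function on the compact space $\Fix(H) \times \Fix(H)$; were $\Fix(H)$ infinite it would contain a sequence of distinct points converging in $X$, along which $D$ would tend to $0$, contradicting the uniform separation. Hence $\Fix(H)$ is finite.

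Finally, I would combine this with injectivity. Because $\tau$ is $G$-equivariant, $x \in \Fix(H)$ implies $h \tau(x) = \tau(h x) = \tau(x)$ for all $h \in H$, so $\tau(\Fix(H)) \subset \Fix(H)$; as $\tau$ is injective and $\Fix(H)$ is finite, this restriction is a bijection, whence $\tau(\Fix(H)) = \Fix(H)$. Taking the union over all finite-index $H$ gives $\Per(X,G) = \bigcup_H \tau(\Fix(H)) \subset \tau(X)$. Since $\tau(X)$ is closed and $\Per(X,G)$ is dense in $X$, we conclude $X = \overline{\Per(X,G)} \subset \tau(X)$, so $\tau$ is surjective. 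The only delicate point is the finiteness lemma; everything else is a routine combination of compactness, density, and the pigeonhole principle for injective self-maps of finite sets.
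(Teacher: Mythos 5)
Your proposal is correct and follows essentially the same route as the paper: the key step in both is that for $x \in \Fix(H)$ the $G$-orbit reduces to the finite set $\{r x : r \in R\}$, so expansiveness forces a uniform separation of points of $\Fix(H)$ and compactness yields finiteness, after which equivariance plus injectivity makes $\tau$ a bijection of each $\Fix(H)$ and density of $\Per(X,G)$ finishes the argument. The only cosmetic difference is that you phrase the separation via the max-metric $D(x,y) = \max_{r \in R} d(rx,ry)$ and a convergent-sequence argument, where the paper instead picks $\varepsilon > 0$ by uniform continuity so that $d(x,y) < \varepsilon$ implies $d(rx,ry) < \delta$ for all $r \in R$; these are interchangeable.
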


\begin{proof}
Suppose that $\tau \colon X \to X$ is an injective  $G$-equivariant continuous map.
Let $d$ be a metric on $X$ that is compatible with the topology and let $\delta > 0$ be an expansiveness constant for $(X,G,d)$. 
Fix a finite-index subgroup  $H$ of $G$ and let $R \subset G$ be a complete set of representatives of the left-cosets of $H$ in $G$, so that
every element $g \in G$ can be uniquely written in the form 
$g = rh$ with $r \in R$ and $h \in H$.
 Since the set $R$ is finite,  for $\varepsilon > 0$ small enough, 
all points $x,y \in X$ such that $d(x,y) < \varepsilon$ satisfy    $d(r x,r y) < \delta$
 for all $r \in R$.
By expansivity, this implies that all distinct points in $\Fix(H)$ are at least 
$\varepsilon$-apart.
As $X$ is compact, we deduce that $\Fix(H)$ is finite.
On the other hand, the $G$-equivariance of  $\tau$ implies that 
$\tau(\Fix(H)) \subset \Fix(H)$.
Since $\tau$ is injective, it follows that $\tau(\Fix(H)) = \Fix(H)$. 
We deduce that $\tau(\Per(X,G)) = \Per(X,G)$ by applying~\eqref{e:periodic-points-X-G}.
As $\Per(X,G)$ is dense in $X$, we conclude that $\tau(X) = X$.
This shows that $\tau$ is surjective. 
\end{proof}

\begin{corollary}
\label{c:factor-strong-irr-with-periodic-surj}
Let $X$ be a  compact metrizable space equipped  with a continuous action of a countable residually finite  group $G$. 
Suppose that  the dynamical system $(X,G)$ is expansive 
and that there exist a finite set $A$, a strongly irreducible subshift 
of finite type $\Sigma \subset A^G$ admitting a periodic configuration, 
such that $(X,G)$ is a factor of $(\Sigma,G)$.
Then the dynamical system $(X,G)$ is surjunctive.
\end{corollary}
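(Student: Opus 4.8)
The plan is to reduce everything to the density of periodic points and then invoke Proposition~\ref{p:ppd-implies-surjunctive}. Since $(X,G)$ is expansive by hypothesis, that proposition tells us it suffices to prove that $\Per(X,G)$ is dense in $X$. Let $\theta\colon\Sigma\to X$ be a factor map. If $v\in\Sigma$ is periodic then its orbit $Gv$ is finite, and $G$-equivariance gives $G\theta(v)=\theta(Gv)$, which is finite; hence $\theta(\Per(\Sigma,G))\subseteq\Per(X,G)$. Moreover, $\theta$ being continuous and surjective sends dense subsets to dense subsets (the preimage of a nonempty open set is nonempty and open, hence meets any dense set). Therefore it is enough to establish that $\Per(\Sigma,G)$ is dense in $\Sigma$.

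Thus the heart of the matter is the following: a strongly irreducible subshift of finite type $\Sigma\subseteq A^G$ over a residually finite group $G$ that admits a periodic configuration has dense periodic points. Fix a periodic configuration $v_0\in\Sigma$, with stabilizer $H_0=\Stab(v_0)$ of finite index, let $\Delta=\Delta^{-1}$ be a symmetric irreducibility constant with $1_G\in\Delta$, and let $\Omega=\Omega^{-1}\ni 1_G$ together with $\PP\subseteq A^\Omega$ define $\Sigma$ as in~\eqref{e:def-sft}. Given $u\in\Sigma$ and a finite set $F\subseteq G$ (which we may assume symmetric and containing $\Delta\cup\Omega$), I want to produce a periodic $w\in\Sigma$ with $w\vert_F=u\vert_F$. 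Using residual finiteness, choose a finite-index subgroup $H\subseteq G$ with $H\subseteq H_0$ and $H\cap BB^{-1}=\{1_G\}$, where $B:=F\Delta\Omega$; this guarantees that $v_0$ is $H$-periodic and that the translates $\{hB:h\in H\}$ are pairwise disjoint. Next, since $\Omega_1:=F$ is $\Delta$-apart from $\Omega_2:=G\setminus F\Delta$, Proposition~\ref{p:strongly-implies-stab} furnishes a configuration $c\in\Sigma$ that agrees with $u$ on $F$ and with $v_0$ on $G\setminus F\Delta$. Finally I periodize $c$: define $w:=v_0$ on $G\setminus HF\Delta$ and $w(h\gamma):=c(\gamma)$ for $h\in H$ and $\gamma\in F\Delta$. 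By the disjointness of the translates $hF\Delta$ and the $H$-periodicity of $v_0$, the configuration $w$ is well defined and $H$-periodic, and $w\vert_F=c\vert_F=u\vert_F$.

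The one genuinely delicate point, and the step I expect to be the main obstacle, is verifying that this periodized $w$ actually lies in $\Sigma$: strong irreducibility produces the interpolating $c$ non-constructively and a priori destroys periodicity, so one must check that copying $c$'s modification into every $H$-translate creates no forbidden pattern. This is exactly where finite type, the large injectivity radius of $H$, and the coherent background $v_0$ combine. Because the sets $hB=hF\Delta\Omega$ are pairwise disjoint, for each $g\in G$ the window $g\Omega$ meets at most one translate $hF\Delta$; one then checks that on $g\Omega$ the values of $w$ coincide with those of the single configuration $c$ read at $h^{-1}g\Omega$, using $c\vert_{G\setminus F\Delta}=v_0\vert_{G\setminus F\Delta}$ and the $H$-periodicity of $v_0$ to match $w$ with $c$ on the part of the window lying outside the tile. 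Hence $(g^{-1}w)\vert_\Omega=((h^{-1}g)^{-1}c)\vert_\Omega\in\PP$ (or equals a window of $v_0$ when $g$ is far from every tile), so $w\in\Sigma$ as required. This shows $\Per(\Sigma,G)$ is dense, completes the reduction, and yields surjunctivity via Proposition~\ref{p:ppd-implies-surjunctive}. Note that the hypothesis that $\Sigma$ admits a periodic configuration is used precisely to supply the $H$-periodic background $v_0$ onto which $u$ is grafted.
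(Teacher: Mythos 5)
Your proof is correct, and its overall reduction coincides with the paper's: expansiveness together with density of $\Per(X,G)$ yields surjunctivity via Proposition~\ref{p:ppd-implies-surjunctive}, and density of $\Per(X,G)$ follows from density of $\Per(\Sigma,G)$ since the factor map sends periodic configurations to periodic points and, being continuous and onto, sends dense sets to dense sets. The difference is how the key density fact is handled: the paper simply cites \cite[Theorem~1.1]{csc-periodic} for the density of periodic configurations in a strongly irreducible subshift of finite type over a residually finite group admitting a periodic configuration, whereas you prove it from scratch. I checked your construction at its delicate points and it holds up: choosing $H \subseteq H_0$ of finite index with $H \cap BB^{-1} = \{1_G\}$, where $B = F\Delta\Omega$, makes the tiles $hF\Delta$ pairwise disjoint, and since $g\Omega$ meets $hF\Delta$ only if $g \in hF\Delta\Omega^{-1} = hB$, each window $g\Omega$ meets at most one tile; the interpolating $c$ from Proposition~\ref{p:strongly-implies-stab} agrees with $v_0$ off $F\Delta$, so on a window meeting the tile $hF\Delta$ one gets $w(gs) = c(h^{-1}gs)$ for all $s \in \Omega$ (using the $H$-invariance of $v_0$ on the part of the window outside $HF\Delta$), while windows missing every tile see only $v_0$; hence $(g^{-1}w)\vert_\Omega \in \PP$ in all cases, and $w$ is $H$-fixed with $w\vert_F = u\vert_F$, giving density. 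In effect you have reconstructed the proof of the cited theorem by the same graft-and-periodize technique that the paper itself deploys for a different purpose in Proposition~\ref{p:reverse}. The paper's route buys brevity; yours buys a self-contained argument that makes visible exactly where finite type, strong irreducibility, residual finiteness, and the existence of a periodic configuration each enter.
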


\begin{proof}
By \cite[Theorem~1.1]{csc-periodic},
the set of periodic configurations is dense in $\Sigma$.
On the other hand, since $\theta$ is $G$-equivariant, 
every periodic configuration in $\Sigma$ is mapped by $\theta$ to a periodic point of $X$.
As $\theta$ is continuous and onto, we deduce  that $\Per(X,G)$ is dense in $X$. 
\end{proof}

\section{Some examples of expansive dynamical systems}

In this section, we describe expansive dynamical systems that may be used as counterexamples for showing the necessity of some of the hypotheses in our results.

\begin{example}
Let $G$ be a countable group.
Consider a dynamical system
 $(X,G)$, where $X$ is a finite discrete space with    
 $k \geq 2$   points
 and  $G$ fixes each point of $X$.
 Each $G$-homoclinicity class of $X$ is reduced to one  point and any map 
 $\tau \colon X \to X$ is continuous and $G$-equivariant.
As $X$ has more than one point,
we deduce that $(X,G)$ is surjunctive but does not have the Myhill property.
The   dynamical system $(X,G)$ satisfies the hypotheses of 
Proposition~\ref{p:ppd-implies-surjunctive}
 without having the Myhill property.
Observe that $(X,G)$  is topologically conjugate to the subshift 
$\Sigma \subset \{0,1,\dots,k - 1\}^G$ consisting of the $k$ constant configurations.
Note also that $\Sigma$ is of finite type if the group $G$ is finitely generated.
Indeed, if $S$ is a finite generating subset of $G$, then $\Sigma$ 
satisfies~\eqref{e:def-sft}
for $\Omega = \{1_G\} \cup S$
by taking as $\PP$ the set of constant maps from $\Omega$ to $\{0,1,\dots,k - 1\}$.
 \end{example}

\begin{example}
Let $G$ be a countable group equipped with the discrete topology and consider
its  one-point compactification $X = G \cup \{\infty\}$.
Observe that $X$ is compact and metrizable.
Actually, $X$ is homeomorphic to
   the subset of $\R$ consisting of $0$ and all the inverses of the positive integers. 
The action of  $G$ on itself by left-multiplication  continuously extends
to an action of $G$ on $X$ that fixes $\infty$.
This action is clearly expansive but not topologically mixing.
The points of $X$ are all in the same $G$-homoclinicity class.
On the other hand,
a map $\tau \colon X \to X$ is continuous and $G$-equivariant if and only if
either $\tau$ sends every point of $X$ to $\infty$
or there exists $g_0 \in G$ such that
$\tau(\infty) = \infty$ and $\tau(g) = g g_0$ for all $g \in G$.
We deduce that the dynamical system $(X,G)$ has the Myhill property and is therefore surjunctive.
However, when $G$ is amenable, 
$(X,G)$ does not satisfy the hypotheses of Theorem~\ref{t:myhill-hyp}.
Indeed, every strongly irreducible subshift is topologically mixing
(see e.g. \cite[Proposition~3.3]{csc-myhill})  and it is clear that any factor of a topologically mixing dynamical system is itself topologically mixing.
Clearly,  $(X,G)$ is topologically conjugate to the subshift $\Sigma \subset \{0,1\}^G$ consisting of all configurations $u \in \{0,1\}^G$ such that there is at most one element $g \in G$ such that $u(g) = 1$.
Consequently,  the dynamical system $(X,G)$ is finitely presented
if and only if the subshift $\Sigma$ is sofic.
It follows from  \cite[Corollary~4.4]{dahmani-yaman-symbolic} that $(X,G)$ is finitely presented  if 
$G$ is polyhyperbolic (e.g. polycyclic).
On the other hand, it is shown in~\cite[Proposition~2.3 and Theorem~2.11]{aubrin-effectiveness}
that if $G$ is a finitely generated and recursively presented group with undecidable word problem
then $(X,G)$ is not finitely presented. 
\end{example}

\begin{example}
Fix an integer $n \geq 2$  and let 
$G$ denote  the solvable (and hence amenable) 
Baumslag-Solitar group
given by the presentation
$G = \langle a,b : bab^{-1} = a^n \rangle$.
Consider the dynamical system $(X,G)$ where $X$ is the real projective line
$S^1 = \R \cup \{\infty\}$ and the action of $G$ on $X$ is the projective action defined by
$a x = x + 1$ and $b x = n x$ for all $x \in X =  \R \cup \{\infty\}$.
Observe that $\infty$ is the only point of $X$ fixed by $G$ and that the orbit of
 any point $x \in X \setminus \{\infty\}$ is dense in $X$.  
Note also that the action of $G$ on $X$ is expansive but not topologically mixing
and that every $G$-homoclinicity class is reduced to one single point.
On the other hand, it is easy to check that
a map $\tau \colon X \to X$ is continuous and $G$-equivariant if and only if either $\tau(x) = \infty$ for all $x \in X$ or $\tau$ is the identity map on $X$.
We deduce that $(X,G)$ is surjunctive but does not have the Myhill property.
Finally, let us note that    $(X,G)$ does not satisfy the conclusion of 
Proposition~\ref{p:homoclinic-pair-sft}.
 \end{example}

\begin{example}
Let $A$ be a finite set and $G$ a countable, amenable, residually finite  group.
Suppose that  $\Sigma \subset A^G$ is a subshift of finite type with 
$h_{top}(\Sigma,G) = 0$ and $\Per(\Sigma,G)$  dense in $\Sigma$.
Suppose also that $\Sigma$ contains a constant configuration $u_0$ but is not reduced to $u_0$.
Then all $G$-homoclinicity classes in $\Sigma$ are trivial by Proposition~\ref{p:reverse}.
Thus, the map $\tau \colon \Sigma \to \Sigma$ defined by $\tau(u) = u_0$ for all $u \in \Sigma$ is pre-injective.
As $\tau$ is continuous and $G$-equivariant but not surjective, this shows that  $\Sigma$ does not have the Myhill property.
Note however that $\Sigma$ is surjunctive by Proposition~\ref{p:ppd-implies-surjunctive}.
\par
An example of a subshift satisfying all these conditions is provided by the 
Ledrappier subshift
(cf. \cite[Section 3]{quas-trow}).  Recall that the \emph{Ledrappier subshift}  is the subshift
$\Sigma \subset A^G$, for $G = \Z^2$ and $A = \Z/2\Z$, consisting of all $x \colon \Z^2 \to \Z/2\Z$ satisfying
$$
x(m,n) + x(m + 1,n) + x(m,n+ 1) = 0
$$
for all $(m,n) \in \Z^2$.
As the Ledrappier subshift is topologically mixing, 
we deduce that Theorem~\ref{t:myhill-hyp}    becomes false if the hypothesis saying that the subshift $\Sigma$ is strongly irreducible is replaced by the condition that 
$\Sigma$ is topologically mixing and of finite type.
Note that $\Sigma$ is a compact abelian group and that $G$ acts on $\Sigma$ by group automorphisms, so that  $(\Sigma,G)$ is an expansive algebraic dynamical system
(cf.~\cite{schmidt-book}).  
\end{example}

\begin{example}
Weiss \cite[p. 358]{weiss-sgds} described a topologically mixing subshift of finite type containing a constant configuration over $\Z^2$ 
that  is not surjunctive and hence does not have the Myhill property.
This shows  that  
both Corollary~\ref{c:ds-is-surj} and Corollary~\ref{c:factor-strong-irr-with-periodic-surj}   become false if the hypothesis saying that the subshift $\Sigma$ is strongly irreducible is replaced by the condition that 
$\Sigma$ is topologically mixing and of finite type.
\end{example}

\begin{example}
Let $G$ be a group  containing a nonabelian free subgroup.
It is known that there is a finite set $A$ such that
$A^G$ does not have the Myhill property (cf.~\cite[Proposition~5.11.2]{csc-book}).
This shows in particular that
amenability of the group $G$ cannot be removed from the hypotheses of 
Theorem~\ref{t:myhill-hyp}.
However, there are non-amenable groups with no nonabelian free subgroups and we do not know if Theorem~\ref{t:myhill-hyp} becomes also false
when applied to  such groups.
An affirmative answer to this question would give a new characterization of amenablity for groups. 
\end{example}
 \bibliographystyle{siam}

\end{document}